\newtheorem{Satz}{Satz}[section]
\newtheorem{Lem}[Satz]{Lemma}
\newtheorem{Cor}[Satz]{Corollary}
\newtheorem{Cla}[Satz]{Claim}
\newtheorem{Thm}[Satz]{Theorem}
\newcommand{\N}{\mathbb{N}}
\newcommand{\R}{\mathbb{R}}
\newcommand{\inv}{^{-1}}
\newcommand{\Leb}{\mathcal{L}}
\newcommand{\Ha}{\frac{1}{2}}
\newcommand{\Dc}{\bar{D}}
\newcommand{\Do}{D}
\newcommand{\K}{{S^1}}
\newcommand{\tn}[1]{\textnormal{#1}}
\newcommand{\tr}{\textrm{d}}
\newcommand{\Rom}[1]{\expandafter\@slowromancap\romannumeral #1@}
\title{Space of minimal discs and its compactification}
\author{Paul Creutz}
\address{Paul Creutz, Mathematisches Institut der Universit\"at zu K\"oln, Weyertal 86-90, 50931 K\"oln, Germany}
\email{pcreutz@math.uni-koeln.de}
\thanks{The author was partially supported by the DFG grant SPP 2026.}
\begin{document}
\begin{abstract}
We investigate the class of geodesic metric discs satisfying a uniform quadratic isoperimetric inequality and uniform bounds on the length of the boundary circle. We show that the closure of this class as a subset of Gromov-Hausdorff space is intimately related to the class of geodesic metric disc retracts satisfying comparable bounds. This kind of discs naturally come up in the context of the solution of Plateau's problem in metric spaces by Lytchak and Wenger as generalizations of minimal surfaces.
\end{abstract}
\maketitle
\section{Introduction}
\label{1}
\subsection{Main result}
\label{1.1}
Since Gromov stated his precompactness criterion in \cite{Gro81a} the study of compact and precompact subsets of Gromov-Hausdorff space of metric spaces became a vivid field. Given a precompact class of metric spaces it is a natural but usually very hard question to determine its closure with respect to Gromov-Hausdorff distance. Various partial results have been obtained in the situation where one considers classes of Riemannian manifolds satisfying uniform bounds on parameters such as curvature, volume and diameter. See for example \cite{Gro99}, \cite{GPW90}, \cite{BGP92} and \cite{CC97} for some of the most important ones. 
In this article we investigate the class of geodesic metric discs satisfying a uniform quadratic isoperimetric inequality and upper bound on the length of the boundary circle. Such discs come up in \cite{LW18a} as a replacement for minimal surfaces in a quite general metric space setting. The main result of this paper shows that the closure of this class is intimately related to the space of geodesic metric disc retracts in the sense of \cite{PS18} satisfying comparable bounds. One should note that even without further geometric assumptions it should be possible to understand the topology of Gromov-Hausdorff limits of geodesic metric discs. Compare for example the $2$-dimensional results in \cite{Why35}, \cite{Shi99} and \cite{Gro99}. However the arising spaces will be topologically more complicated than disc retracts in general.\par
We say that a geodesic metric disc $Z$ satisfies a \textit{$C$-quadratic isoperimetric inequality} if for every Jordan domain $U\subset Z$ one has
\begin{equation}
\label{e1}
\mathcal{H}^2(U)\leq C \cdot l(\partial U)^2.
\end{equation}
We call a metric space $Z$ a \textit{disc retract} if there exists a closed curve $\gamma:\K\rightarrow X$ such that the mapping cylinder~$Z_\gamma$ is a topological disc. We write $l(\partial Z)\leq L$ if there exists such~$\gamma$ satisfying~$l(\gamma)\leq L$.
\begin{Thm}
\label{t1}
Let $C,L\in (0,\infty)$. Denote by $\mathcal{D}(L,C)$ the set of geodesic metric discs $Z$ satisfying a $C$-quadratic isoperimetric inequality and $l(\partial Z)\leq L$. And by $\mathcal{E}(L,C)$ the set of geodesic metric disc retracts satisfying the same two properties.\par 
Then $\mathcal{E}(L,C)$ is compact and
\begin{equation}
\overline{\mathcal{D}(L,C)}\subset \mathcal{E}(L,C)\subset \overline{\mathcal{D}(L,C+(2\pi)\inv)}.
\end{equation}
where $\overline{\mathcal{D}(L,C)}$ denotes the closure of $\mathcal{D}(L,C)$ with respect to Gromov-Hausdorff distance.
\end{Thm}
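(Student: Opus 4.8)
The plan is to establish the two inclusions through four steps: first that $\mathcal{D}(L,C)\subset\mathcal{E}(L,C)$, then that $\mathcal{E}(L,C)$ is precompact and closed (hence compact), which together yield the left inclusion, and finally an explicit approximation yielding the right inclusion. \textbf{Discs are disc retracts.} A geodesic metric disc $Z$ is homeomorphic to $\Dc$ with boundary circle $\partial Z$ of length $l(\partial Z)\le L$. Taking $\gamma\colon\K\to Z$ to be a parametrization of $\partial Z$, the mapping cylinder $Z_\gamma$ is $Z$ with an exterior collar glued along a homeomorphism of its boundary, hence again a topological disc; since $l(\gamma)=l(\partial Z)\le L$ and the isoperimetric inequality is unchanged, $Z\in\mathcal{E}(L,C)$, so $\mathcal{D}(L,C)\subset\mathcal{E}(L,C)$.

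\textbf{Precompactness of $\mathcal{E}(L,C)$.} I would first extract a uniform area bound $\mathcal{H}^2(Z)\le CL^2$ by applying the isoperimetric inequality to $Z$ itself, exhausting it by Jordan domains whose boundaries approach $\gamma$. Applying the inequality to metric balls together with the coarea estimate $\tfrac{\tr}{\tr r}\mathcal{H}^2(B(p,r))\ge l(\partial B(p,r))$ valid for a.e.\ $r$, the quantity $a(r):=\mathcal{H}^2(B(p,r))$ satisfies $a'(r)\ge\sqrt{a(r)/C}$, whence the quadratic lower bound $a(r)\ge r^2/(4C)$. The global area bound then forces $\operatorname{diam}Z\le 2CL$, and comparing the disjoint half-balls of a maximal $\varepsilon$-separated set yields a covering number $N(\varepsilon)\le 16C^2L^2\varepsilon^{-2}$ independent of $Z$; Gromov's criterion \cite{Gro81a} gives precompactness.

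\textbf{Closedness of $\mathcal{E}(L,C)$.} Given $Z_n\in\mathcal{E}(L,C)$ with $Z_n\to Z$, the limit is compact and geodesic by the standard theory, the constant-speed boundary curves $\gamma_n$ subconverge to some $\gamma$ with $l(\gamma)\le L$ by lower semicontinuity of length, and lower semicontinuity of $\mathcal{H}^2$ together with convergence of fillings transfers the $C$-isoperimetric inequality to $Z$. The delicate point is that $Z$ remains a disc retract. Here I would invoke the uniform modulus of continuity furnished by the quadratic isoperimetric inequality in the sense of Lytchak--Wenger \cite{LW18a}: each $Z_n$ is the image of an equicontinuous map $\Dc\to Z_n$ realizing the mapping cylinder, with modulus depending only on $C$ and $L$. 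By Arzel\`a--Ascoli these maps subconverge to a continuous surjection $\Dc\to Z$ restricting to $\gamma$ on $\K$, and a monotone (cell-like) decomposition argument in the spirit of \cite{PS18} identifies $Z_\gamma$ as a disc. This passage of the topological structure through the Gromov--Hausdorff limit is the main obstacle. Combined with $\mathcal{D}(L,C)\subset\mathcal{E}(L,C)$ and compactness, it gives $\overline{\mathcal{D}(L,C)}\subset\mathcal{E}(L,C)$.

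\textbf{Approximation.} For the right inclusion, given $Z\in\mathcal{E}(L,C)$ with $\gamma\colon\K\to Z$ of length $\ell\le L$, I would metrize the mapping cylinder $Z_\gamma\cong\Dc$ by attaching to $Z$ a thin flat collar $\K\times[0,\delta]$ of circumference $\ell$ glued along $\gamma$ and passing to the induced length metric $W_\delta$. Each $W_\delta$ is a genuine geodesic disc with boundary circle of length $\ell\le L$, and $W_\delta\to Z$ as $\delta\to 0$. The remaining task is the isoperimetric inequality with the degraded constant: for a Jordan domain $U\subset W_\delta$ I would split $U$ along the gluing circle, bound the $Z$-part by the $C$-inequality after pushing the collar part down via the $1$-Lipschitz retraction, and control the collar contribution by capping the relevant arcs with Euclidean model discs. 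A careful accounting of the arcs of $\partial U$ running along the gluing locus, using the sharp planar isoperimetric inequality, produces exactly the additional $(2\pi)\inv$; letting $\delta\to 0$ then places $Z$ in $\overline{\mathcal{D}(L,C+(2\pi)\inv)}$.
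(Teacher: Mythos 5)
Your steps 1 (discs are disc retracts) and 4 (thin-collar approximation with the sharp planar constant via Reshetnyak-type filling of the collar arcs) are essentially the paper's argument; the latter matches the paper's metrized mapping cylinder $Z_{\gamma,L,R}$ with $R=1/n$ and its isoperimetric constant $C+\max\{(2\pi)\inv,R/L\}$. The two middle steps, however, contain genuine gaps.

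\textbf{Precompactness.} Your lower volume growth bound $\mathcal{H}^2(B(p,r))\geq r^2/(4C)$ is false on $\mathcal{E}(L,C)$: disc retracts can be partly or entirely one-dimensional. A compact metric tree is a disc retract (the mapping cylinder of a suitable boundary curve is a disc), satisfies any $C$-quadratic isoperimetric inequality with $\mathcal{H}^2=0$, and so violates any quadratic lower bound on ball areas; hence the half-ball packing argument and the covering bound $N(\varepsilon)\leq 16C^2L^2\varepsilon^{-2}$ collapse. (Even on genuine discs the step is delicate, since metric balls need not be Jordan domains, so the isoperimetric inequality cannot be applied to them directly; the paper instead gets covering bounds for $\mathcal{D}$ from Theorem~\ref{t11} combined with the net bound of Theorem~\ref{t10}.\ref{en6} from \cite{LW18a}.) In the paper, precompactness of $\mathcal{E}$ is \emph{not} proved directly at all: it is deduced from the right-hand inclusion $\mathcal{E}\subset\overline{\mathcal{D}(L,C+(2\pi)\inv)}$ together with precompactness of $\mathcal{D}$, so the logical order of your steps must be reversed.

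\textbf{Closedness.} Your key claim that each $Z_n\in\mathcal{E}(L,C)$ is the image of a map $\Dc\to Z_n$ that is equicontinuous with modulus depending only on $C$ and $L$ is unsupported and, as stated, unavailable: the uniform H\"older estimates for solutions of Plateau's problem (Theorem~\ref{t8}) require a chord-arc condition on the boundary curve, and the boundary curves $\gamma_n$ of disc retracts are badly non-chord-arc (for a tree, $\gamma_n$ doubles back on itself, so no $\lambda$ works). This is exactly the obstacle you name but do not resolve. The paper's resolution is the fattened-collar trick: set $W^n:=Z^n_{\gamma^n,L,L}$, which by Theorem~\ref{t12} are genuine geodesic discs with $1$-chord-arc boundary of length $L$, isoperimetric constant $C+1$, and area $\mathcal{H}^2(Z^n)+L^2$. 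The chord-arc compactness theorem (Theorem~\ref{t15}, resting on Whyburn's theorem via the uniform LGC bounds of \cite{LW18a}) then yields a limit disc $W$, and the explicit formula \eqref{e21} for the mapping-cylinder metric identifies $W$ with $Z_{\gamma,L,L}$, where $\gamma$ is an Arzel\`a--Ascoli limit of the $\gamma^n$; hence $Z_\gamma$ is a topological disc and $Z$ is a disc retract with $l(\partial Z)\leq L$. Without this construction (or a genuine substitute for it), your closedness step, and therefore both the compactness of $\mathcal{E}(L,C)$ and the inclusion $\overline{\mathcal{D}(L,C)}\subset\mathcal{E}(L,C)$, remain unproved.
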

It has been shown in \cite{LW18b} that a proper geodesic metric space $X$ satisfies a $(4\pi)\inv$-quadratic isoperimetric inequality iff it is a $\tn{CAT}(0)$-space. In this light our theorem~\ref{t1} covers as a special case the compactness lemma in \cite{PS18} which states that the class of $\tn{CAT}(0)$ disc retracts $Z$ satisfying $l(\partial Z)\leq L$ is compact.\par 
Note that by \cite{LW17a} for a sufficiently small constant $C$ every compact geodesic metric space satisfying a $C$-quadratic isoperimetric inequality is a metric tree and hence not a metric disc. So at least in general one cannot have $\overline{\mathcal{D}(L,C)}= \mathcal{E}(L,C)$.
\subsection{Byproducts}
\label{1.2}
Classical problem of Plateau formulated in the metric space setting asks whether for a given Jordan curve $\Gamma$ in a metric space~$X$ there exists a Sobolev disc $u \in W^{1,2}(\Do,X)$ of least area spanning~$\Gamma$. Lytchak and Wenger solved Plateau's problem for proper metric spaces in \cite{LW17a}. Similar to the intrinsic metric on an immersed submanifold to a solution $u$ of Plateau's problem for a given Jordan curve $\Gamma$ one may associate a geodesic metric space $Z_u$.  If $X$ satisfies a $C$-quadratic isoperimetric inequality, then $Z_u$ is a geodesic metric disc satisfying a $C$-quadratic isoperimetric inequality and $l(\partial Z_u)=l(\Gamma)$, see~\cite{LW18a}. Here we say that $X$ satisfies a \textit{$C$-quadratic isoperimetric inequality} if every closed Lipschitz curve $\gamma:\K \rightarrow X$ admits a disc Sobolev disc $v\in W^{1,2}(\Do,X)$ spanning $\gamma$ such that 
\[
\tn{Area}(v)\leq C \cdot l(\gamma)^2.
\]
In section~\ref{2.3} we prove the following.
\begin{Thm}
\label{t2}
Let $Z\in \mathcal{D}(L,C)$ and $u\in W^{1,2}(\Do,Z)$ be a solution of Plateau's problem for the curve $\partial Z$ in $Z$. Then $Z_u$ isometric to $Z$.
\end{Thm}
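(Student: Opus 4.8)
The plan is to analyze the canonical projection $P\colon Z_u\to Z$ and to promote it to an isometry through a comparison of areas together with an infinitesimal argument. Recall that $Z_u$ is the quotient of $\Do$ by the pseudometric $d_u(x,y)=\inf_\eta\int_0^1\tn{md}_{\eta(t)}u\,(\dot\eta(t))\,\tr t$, the infimum running over curves $\eta$ in $\Do$ from $x$ to $y$ and $\tn{md}\,u$ denoting the approximate metric differential of $u$. Writing $\pi\colon\Do\to Z_u$ for the natural projection, $u$ factors as $u=P\circ\pi$, and since the $d_u$-length of any curve dominates the $Z$-length of its $u$-image, the map $P$ is $1$-Lipschitz. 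First I would record that $P$ is onto: as $u$ solves Plateau's problem for the Jordan curve $\partial Z$ bounding the disc $Z$, its boundary trace parametrizes $\partial Z$ monotonically with degree one, so a standard degree argument forces $u(\Do)=Z$ and hence $P(Z_u)=Z$.

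Next I would run an area comparison. Since $Z$ is a geodesic metric disc it admits a homeomorphic Sobolev parametrization $v\in W^{1,2}(\Do,Z)$ satisfying Lusin's condition (N); because $Z$ is geodesic this parametrization reproduces its own intrinsic space, $Z_v=Z$, with $\tn{Area}(v)=\mathcal{H}^2(Z)$. Minimality of $u$ gives $\tn{Area}(u)\le\tn{Area}(v)$, the area formula applied to the surjection $\pi$ yields $\mathcal{H}^2(Z_u)\le\tn{Area}(\pi)=\tn{Area}(u)$, and the $1$-Lipschitz surjection $P$ gives $\mathcal{H}^2(Z)\le\mathcal{H}^2(Z_u)$. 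Chaining these,
\begin{equation}
\mathcal{H}^2(Z)\le\mathcal{H}^2(Z_u)\le\tn{Area}(\pi)=\tn{Area}(u)\le\tn{Area}(v)=\mathcal{H}^2(Z),
\end{equation}
so every inequality is an equality. In particular $\tn{Area}(\pi)=\mathcal{H}^2(Z_u)$, so $\pi$ is essentially injective, and $\mathcal{H}^2(Z_u)=\mathcal{H}^2(Z)$ with $P$ a $1$-Lipschitz surjection, so $P$ cannot contract on a set of positive measure.

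Finally I would upgrade $P$ to an isometry. Comparing the defining infimum for $d_u$ with the $1$-Lipschitz bound for $P$ shows that the metric differentials agree, $\tn{md}_z\pi=\tn{md}_z u$ for a.e. $z$, whence $P$ preserves the length of every curve of the form $\pi\circ\eta$ and in particular restricts to a length-preserving monotone map on $\partial Z$. The remaining, and main, difficulty is to show that $P$ is injective and satisfies the reverse distance bound $d_Z(P(p),P(q))\ge d_{Z_u}(p,q)$: a priori $Z$ could contain short paths between $P(p)$ and $P(q)$ that do not lift through $\pi$, and such a shortcut would be invisible to the area comparison, which controls $P$ only up to null sets. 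I expect this to be the crux. The plan is to exclude it by exploiting that $Z$ is itself a minimal disc: using the equality $\tn{Area}(\pi)=\mathcal{H}^2(Z_u)$ to see that $\pi$ is cell-like and monotone, one forces $P$ to be injective and lifts a $Z$-geodesic between $P(p)$ and $P(q)$ to a curve in $\Do$ of the same length, whose $\pi$-image then joins $p$ to $q$ in $Z_u$ with length equal to $d_Z(P(p),P(q))$ by the metric-differential identity. This yields $d_{Z_u}(p,q)\le d_Z(P(p),P(q))$, and combined with the $1$-Lipschitz property and surjectivity of $P$ it shows that $P$ is a surjective isometry, i.e. that $Z_u$ is isometric to $Z$.
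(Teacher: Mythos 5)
Your high-level frame --- show that the canonical map $\hat u\colon Z_u\to Z$ (your $P$) is a surjective isometry, the $1$-Lipschitz bound and surjectivity being the easy part --- matches the paper, and you correctly flag the crux: the reverse inequality $d_{Z_u}(p,q)\le d_Z(P(p),P(q))$. But your plan for the crux has a genuine gap. The inference ``$\tn{Area}(\pi)=\mathcal{H}^2(Z_u)$ $\Rightarrow$ $\pi$ is cell-like and monotone $\Rightarrow$ $P$ is injective'' does not work: equality of area and Hausdorff measure only says that $\pi$ has fibers of cardinality one almost everywhere, which implies neither monotonicity nor cell-likeness (monotonicity of $u$, and hence of $P_u$, is a separate hard result of Lytchak--Wenger, quoted as theorem~\ref{t9} and theorem~\ref{t10}, not a consequence of the area comparison). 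Moreover, even granting that $u$ is monotone, injectivity of $P$ does not follow: a fiber $u\inv(z)$ is connected but need not be \emph{path}-connected, and if it is not, two points $x,y\in u\inv(z)$ can have $d_u(x,y)>0$ because every curve joining them must leave the fiber. The same obstruction defeats your lifting step: a geodesic in $Z$ pulls back under the monotone map $u$ to a compact connected set, but connectedness does not yield a curve in $\Dc$ projecting onto the geodesic, let alone one of controlled $d_u$-length. This connected-versus-path-connected issue is exactly what the paper's proof is organized around: it replaces $d_u$ by the diameter-based intrinsic pseudometric $|d_u|$ of \eqref{e19}, for which monotonicity of $u$ immediately gives $|d_u|(x,y)\le d(u(x),u(y))$ (take the connected set $C=u\inv(\tn{im}(\gamma))$ for a geodesic $\gamma$), and then passes from $|d_u|$ back to $d_u$ via the factorization \eqref{e20} and \cite[proposition~9.3]{PS18}, which requires continuity of $P_u$ and the ``no bubbles'' property, the latter established by a separate topological argument. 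Your sketch contains no substitute for that step.

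A second, smaller gap: you assert that $Z$ admits a homeomorphic Sobolev parametrization $v\in W^{1,2}(\Do,Z)$ with Lusin's property (N) and $\tn{Area}(v)=\mathcal{H}^2(Z)$. No such parametrization is known to exist in this generality --- solutions of Plateau's problem in metric discs are only monotone, not injective --- so this is a stronger claim than anything the theorem needs. This part is repairable: the facts you actually use, namely $\tn{Area}(u)=\mathcal{H}^2(Z)$ and monotonicity of $u$, are precisely the content of theorem~\ref{t9}, which you could cite instead of constructing $v$. The first gap, however, is essential; as written the proposal does not prove the theorem.
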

So it is natural to think of elements of $\mathcal{D}(L,C)$ as intrinsic minimal discs. Especially theorem~\ref{t2} allows to apply the structural results of \cite{LW18a} to elements of~$\mathcal{D}_{L,C}$. One consequence of this being that~$\mathcal{D}_{L,C}$ is precompact in Gromov-Hausdorff space.\par 
If additionally one imposes a uniform condition avoiding degeneration of the boundary curve, then one only needs to adjoin the trivial metric space~$\{*\}$ to compactify. A Jordan curve $\Gamma$ in a metric space $ X$ is said to be $\lambda$-\textit{chord-arc} where $\lambda \geq 1$ if for all $x,y \in \Gamma$ the length of the shorter of the two arcs connecting $x$ and $y$ in $\Gamma$ is bounded above by~$\lambda \cdot d(x,y)$.
\begin{Thm}
\label{t3}
Let $C,L \in (0,\infty)$, $\lambda \in [1,\infty)$ and $\mathcal{D}^\lambda(L,C)$ be the set of those $Z\in \mathcal{D}(L,C)$ such that $\partial Z$ is $\lambda$-chord-arc. Then $\mathcal{D}^\lambda(L,C)\cup \{*\}$ is compact.
\end{Thm}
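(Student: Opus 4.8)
The plan is to deduce compactness of $\mathcal{D}^\lambda(L,C)\cup\{*\}$ from the precompactness already furnished by Theorem~\ref{t1}, by identifying exactly which Gromov--Hausdorff limits can occur. Since $\mathcal{D}^\lambda(L,C)\subset\mathcal{D}(L,C)$ and $\overline{\mathcal{D}(L,C)}\subset\mathcal{E}(L,C)$ is compact, the set $\mathcal{D}^\lambda(L,C)\cup\{*\}$ is precompact, and it suffices to prove that it is closed: given a sequence $(Z_n)$ in it, if infinitely many terms equal $\{*\}$ we are done, and otherwise a subsequence lies in $\mathcal{D}^\lambda(L,C)$ and converges to some $Z_\infty\in\mathcal{E}(L,C)$, so the task reduces to showing $Z_\infty\in\mathcal{D}^\lambda(L,C)\cup\{*\}$. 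I would organize this around the behaviour of the boundary lengths $\ell_n:=l(\partial Z_n)\in(0,L]$; after passing to a further subsequence I may assume $\ell_n\to\ell_\infty\in[0,L]$.

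First I would treat the degenerate case $\ell_\infty=0$. The key input is an a priori diameter bound $\operatorname{diam}(Z)\le\Lambda(C)\,l(\partial Z)$ valid for every $Z\in\mathcal{D}(L,C)$; this follows by a standard coarea-plus-isoperimetric estimate, controlling the distance of an interior point to $\partial Z$ through the quadratic growth of the area function of metric balls forced by the $C$-quadratic isoperimetric inequality. Consequently $\operatorname{diam}(Z_n)\le\Lambda(C)\,\ell_n\to0$, so $Z_n\to\{*\}$ and the limit is the admissible point space.

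In the non-degenerate case $\ell_\infty>0$ I would exploit the chord-arc hypothesis to prevent any collapse of the boundary. Parametrize each $\partial Z_n$ proportionally to arc length by $\gamma_n\colon\K\to Z_n$; then $\gamma_n$ is $(L/2\pi)$-Lipschitz, while the $\lambda$-chord-arc condition gives the lower bound $d_{Z_n}(\gamma_n(s),\gamma_n(t))\ge(2\pi\lambda)\inv\ell_n\,d_\K(s,t)$ for the arc distance on $\K$. Realizing the convergence $Z_n\to Z_\infty$ inside a common compact metric space and applying Arzel\`a--Ascoli, a subsequence of the $\gamma_n$ converges uniformly to a map $\gamma_\infty\colon\K\to Z_\infty$ inheriting both the upper and the, since $\ell_\infty>0$, uniformly positive lower bound. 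Hence $\gamma_\infty$ is a bi-Lipschitz embedding, so $\partial Z_\infty$ is a Jordan curve that is again $\lambda$-chord-arc and, by lower semicontinuity of length, satisfies $l(\partial Z_\infty)\le L$. Membership of $Z_\infty$ in $\mathcal{E}(L,C)$ already guarantees the $C$-quadratic isoperimetric inequality.

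The remaining and main obstacle is to upgrade the disc retract $Z_\infty$ to a genuine geodesic metric disc, that is, to rule out the tree-like or pinched configurations that a general element of $\mathcal{E}(L,C)$ may exhibit. The point is that each such degeneracy forces the curve defining the disc retract structure to fail injectivity: a collapsed boundary arc, an interior pinch separating $Z_\infty$ into lobes, or an attached filament all make that curve traverse some point twice, which is incompatible with the embeddedness of $\gamma_\infty$. I would make this precise using the structural description of disc retracts from \cite{PS18} together with the identification, carried out in the proof of Theorem~\ref{t1}, of $\gamma_\infty$ with the curve defining the disc retract structure of $Z_\infty$; once that curve is a genuine Jordan curve, the collar in the mapping cylinder is trivial and $Z_\infty$ is a topological disc. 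This yields $Z_\infty\in\mathcal{D}^\lambda(L,C)$ and completes the proof. I expect this final identification --- showing that no interior collapse can persist once the boundary is non-degenerate --- to be the technically most delicate point.
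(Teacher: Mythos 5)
Your proposal has a genuine gap, and it is structural: you prove Theorem~\ref{t3} by invoking Theorem~\ref{t1} (compactness of $\mathcal{E}(L,C)$, the inclusion $\overline{\mathcal{D}(L,C)}\subset\mathcal{E}(L,C)$, and ``the identification carried out in the proof of Theorem~\ref{t1}''), but in the paper Theorem~\ref{t1} is \emph{derived from} Theorem~\ref{t3}: the proof of theorem~\ref{t16} establishes that $\mathcal{E}$ is closed precisely by forming the mapping cylinders $W^n=Z^n_{\gamma^n,L,L}$, noting $W^n\in\mathcal{D}^1(L,A+L^2,C+1,l_0)$, and applying theorem~\ref{t15} --- i.e.\ exactly the chord-arc compactness you are trying to prove --- to that sequence. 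So your argument is circular as it stands. Precompactness alone can be obtained non-circularly from corollary~\ref{c1} (via theorem~\ref{t11} and theorem~\ref{t10}), but your two key claims --- that every limit lies in $\mathcal{E}(L,C)$, and that the limit curve $\gamma_\infty$ coincides with the curve defining its disc retract structure --- both sit downstream of Theorem~\ref{t3} in the paper's logic and are not available to you.

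Second, and independently of the circularity, the step you yourself flag as ``technically most delicate'' --- upgrading the limit from a disc retract to a genuine disc --- is the heart of the theorem, and your proposal does not prove it. Knowing only that $Z_\infty$ is a disc retract with respect to \emph{some} unknown defining curve, and that $Z_\infty$ contains an embedded circle arising as a limit of the $\partial Z_n$, does not exclude, say, a disc with a segment attached at a boundary point: that space is a disc retract and contains an embedded circle, so your injectivity argument only bites once you know $\gamma_\infty$ \emph{is} (up to reparametrization) the defining curve of the retract structure --- which is the very identification the paper obtains only through theorem~\ref{t15}/theorem~\ref{t16}. The paper's actual mechanism for disc recognition is entirely different and never passes through disc retracts: Whyburn's theorem (theorem~\ref{t13}, applied via theorem~\ref{t14}) guarantees that Hausdorff limits of discs that are uniformly $\tn{LGC}$ and have uniformly $\tn{LGC}$ boundaries are again discs or points; the uniform $\tn{LGC}$ bound for $Z_n$ comes from theorem~\ref{t11} combined with theorem~\ref{t10}.\ref{en5}, and the bound for $\partial Z_n$ from the chord-arc hypothesis. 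Your proposal contains no substitute for this topological input. (Your treatment of the degenerate case via a diameter bound $\tn{diam}(Z)\leq \Lambda(C)\cdot l(\partial Z)$ is fine in spirit --- the paper reaches the same conclusion from the $\tn{LGC}$ bound --- and your bi-Lipschitz convergence of the boundary parametrizations is a correct observation; but neither touches the main difficulty.)
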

The proof is based on a result from~\cite{Why35} which shows that under certain uniform local contractibility conditions Hausdorff limits of discs are again discs.\par
In section~\ref{3} we discuss how to associate to a disc retract $Z\in \mathcal{E}(L,C)$ a metric on the mapping cylinder $Z_\gamma$ such that $Z_\gamma\in \mathcal{D}^1(L,C+1)$. This general construction might be of independent interest and is also applied in \cite{Crearb}. Using this observation it is not to hard to derive theorem~\ref{t1} from theorem \ref{t3}.\par
We call a monotone increasing function $\rho:(0,R_0)\rightarrow \R$ a \textit{contractibilty function} if $\lim_{r\rightarrow 0}\rho(r)=0$. We say that a metric space $X$ is in $\tn{LGC}(\rho)$ if for every $x\in X$ and $r\in (0,R_0)$ the ball $B(x,r)$ is contractible inside $B(x,\rho(r))$. Using Whyburn's result in \cite{Why35} instead of theorem~\ref{t3} and the same construction we also obtain.
\begin{Thm}
Let $\rho$ be a contractibility function, $L \in (0,\infty)$ and $(Z_n)$ be a sequence of disc retracts such that $Z_n\in \tn{LGC}(\rho)$ and $l(\partial Z_n)\leq L$. If $(Z_n)$ GH-converges to a compact metric space~$Z$, then $Z$ is a disc retract and $l(\partial Z)\leq L$.
\end{Thm}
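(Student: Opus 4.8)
The plan is to reduce to Whyburn's theorem on Hausdorff limits of discs by replacing each disc retract with a genuine topological disc, exactly as in the derivation of Theorem~\ref{t1} from Theorem~\ref{t3}, but now feeding the output into \cite{Why35} rather than into Theorem~\ref{t3}. For each $n$ I would choose a curve $\gamma_n\colon\K\to Z_n$ realizing the disc retract structure with $l(\gamma_n)\leq L$ and, by the construction of Section~\ref{3}, equip the mapping cylinder $W_n\coloneqq (Z_n)_{\gamma_n}$ with a metric turning it into a topological metric disc with $l(\partial W_n)\leq L$, into which $Z_n$ embeds isometrically as a closed subset and which admits a $1$-Lipschitz mapping cylinder projection $r_n\colon W_n\to Z_n$ collapsing each collar segment to the corresponding point of $\gamma_n$.

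The first point to check is that this construction preserves uniform local contractibility: I claim $W_n\in\tn{LGC}(\rho')$ for a contractibility function $\rho'$ depending only on $\rho$ and $L$. Away from the gluing locus the collar $\K\times[0,1]$ is contractible with an explicit, $n$-independent function, while near the core a ball in $W_n$ deformation retracts into a ball in $Z_n$ of comparable radius and then contracts inside $Z_n$ by hypothesis; combining these yields the required $\rho'$. Since $Z_n\to Z$ the family $\{Z_n\}$ is uniformly totally bounded, and adjoining a collar of uniformly bounded size keeps $\{W_n\}$ uniformly totally bounded, hence precompact. Passing to a subsequence, $W_n\to W$ in the Gromov--Hausdorff sense, and Whyburn's result applied to the discs $W_n\in\tn{LGC}(\rho')$ shows that $W$ is again a topological disc, with the boundary circles converging to $\partial W$. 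Parametrizing $\partial W_n$ with constant speed (admissible since $l(\partial W_n)\leq L$) and invoking Arzel\`a--Ascoli, a further subsequence converges to a curve $\sigma\colon\K\to\partial W$ with $l(\sigma)\leq L$.

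To recover the limit structure I would realize all convergences simultaneously: embed the $W_n$ and $W$ isometrically into a common compact metric space so that $W_n\to W$ in Hausdorff distance and, compatibly, $Z_n\to Z$ with $Z\subset W$ closed. The $1$-Lipschitz projections $r_n$ are then equicontinuous, so a subsequence converges to a $1$-Lipschitz retraction $r\colon W\to Z$. Setting $\gamma\coloneqq r\circ\sigma\colon\K\to Z$ produces a closed curve in $Z$ with $l(\gamma)\leq l(\sigma)\leq L$, which will serve as the boundary curve exhibiting $Z$ as a disc retract; this already gives the bound $l(\partial Z)\leq L$ once the disc retract property is in hand. (A degenerate limit is harmless here, since the cone on $\K$ is a disc and a point is a disc retract with $l(\partial Z)=0$.)

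The main obstacle is the final identification: showing that the limiting data genuinely exhibits $Z$ as a disc retract, i.e.\ that $W$ is homeomorphic to the mapping cylinder $Z_\gamma$. One must verify that the mapping cylinder structure survives the limit — that the annular region $W\setminus Z$ is still a collar $\partial W\times[0,1)$ on which $r$ collapses the radial segments to $\gamma$, with $r$ monotone and injective off $\gamma(\K)$ — rather than pinching off or having the core $Z$ absorb part of the collar. Here the uniform contractibility is decisive: it is precisely what rules out the wild behaviour that could destroy the collar in the limit, and it allows one to transport the homeomorphisms $\Dc\cong W_n$ to a homeomorphism $\Dc\cong Z_\gamma\cong W$, which completes the proof.
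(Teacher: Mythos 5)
Your overall strategy---metrize the mapping cylinders $W_n=(Z_n)_{\gamma_n}$ via the construction of section~\ref{3}, check that they are uniformly $\tn{LGC}$ topological discs with boundary length at most $L$, extract a Gromov--Hausdorff limit $W$ using uniform total boundedness, and invoke Whyburn---is exactly the paper's strategy (its proof of theorem~\ref{t17} runs the last part of the proof of theorem~\ref{t16} with theorem~\ref{t12}.\ref{en11} and theorem~\ref{t14}). Up to the point where $W$ is known to be a topological disc your outline is sound, except for one detail you gloss over: Whyburn's theorem~\ref{t13} (equivalently the hypotheses of theorem~\ref{t14}) requires the boundary circles $\partial W_n$ to be uniformly $\tn{LGC}$ as well, which the paper arranges by taking the collar height $R=L$, so that theorem~\ref{t12}.\ref{en10} makes $\partial W_n$ a $1$-chord-arc curve; with an unspecified collar parameter this can fail. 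The genuine gap, however, is the step you yourself flag as ``the main obstacle'': identifying $W$ with a mapping cylinder over $Z$. Your proposed closure---that uniform contractibility lets one ``transport the homeomorphisms $\Dc\cong W_n$'' and that the limit retraction $r=\lim r_n$ is still monotone, injective off the boundary curve, and collapses a surviving collar---is not an argument. Uniform limits of injective or monotone maps need not be injective or monotone, and GH-convergence plus $\tn{LGC}$ bounds never allow one to transport homeomorphisms: that even discness survives is precisely the nontrivial content of Whyburn's theorem, and it says nothing about the finer collar structure. Moreover your curve $\gamma=r\circ\sigma$ comes with no control relating $Z_\gamma$ to the topology of $W$, so even granting $W\cong\Dc$ you have not produced a curve whose mapping cylinder is a disc.

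The paper closes exactly this gap by a purely metric device that your proposal misses. Take $\gamma$ to be the Arzel\`a--Ascoli limit of the original constant-speed boundary curves $\gamma_n$ (constant speed is available by lemma~\ref{l3}), all realized inside one compact ambient space in which $Z_n\rightarrow_H Z$. The metric on $W_n=(Z_n)_{\gamma_n,L,L}$ is given by the explicit formula~\eqref{e21} in terms of $d_{Z_n}$ and $\gamma_n$; since $Z_n\rightarrow_H Z$ and $\gamma_n\rightarrow\gamma$ uniformly, this formula passes to the limit and shows that the GH-limit of the $W_n$ is \emph{isometric} to $Z_{\gamma,L,L}$. In other words, the limit of the metrized mapping cylinders is the metrized mapping cylinder of the limit data---no topological statement about collars surviving needs to be proved, and no limit of the retractions $r_n$ is needed at all. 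Now theorem~\ref{t14} says this limit is a topological disc, and by lemma~\ref{l2} the metric of $Z_{\gamma,L,L}$ metrizes the quotient topology of the mapping cylinder $Z_\gamma$; hence $Z_\gamma$ is homeomorphic to $\Dc$, so $Z$ is a disc retract, and $l(\partial Z)\leq l(\gamma)\leq L$ by lower semicontinuity of length. Your proof becomes correct once your final paragraph is replaced by this identification; as written, it asserts rather than proves the one step that cannot be obtained by soft limiting arguments.
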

\subsection{Local versions}
\label{1.3}
Let $C\in (0,\infty)$ and $l_0\in(0,\infty]$. A metric space $X$ is said to satisfy a $(C,l_0)$-\textit{(local) quadratic isoperimetric inequality} if for every closed Lipschitz curve $\gamma:\K \rightarrow X$ such that $l(\gamma)<l_0$ there exists a disc $u\in W^{1,2}(\Do,X)$ such that $\tn{tr}(u)=\gamma$ and $\tn{Area}(u)\leq C \cdot l(\gamma)$. Taking $l_0=\infty$ this definition is compatible with the definition given in~\ref{1.1}, see \cite{Crearb}. Easy examples show that the class of geodesic metric discs $Z$ satisfying a $(C,l_0)$-quadratic isoperimetric inequality and $l(\partial Z)\leq L$ is not precompact in Gromov-Hausdorff space. Think for example of cylinders $\K \times [0,R]$ attached a spherical cap at one end. However this may be fixed by bounding also the two dimensional Hausdorff measure of $Z$.\par 
Let $\mathcal{D}(L,A,C,l_0)$ be the class of geodesic metric discs $Z$ satisfying a $(C,l_0)$-quadratic isoperimetric inequality, $l(\partial Z)\leq L$ and $\mathcal{H}^2(Z)\leq A$. Let $\mathcal{E}(L,A,C,l_0)$ be the class of geodesic metric disc retracts satisfying the same bounds. Then we prove that $\mathcal{D}(L,A,C,l_0)$ is precompact and adequately adjusted variants of theorem~\ref{t1}, theorem~\ref{t2} and theorem~\ref{t3} hold for this class replacing $\mathcal{E}(L,C)$ by $\mathcal{E}(L,A,C,l_0)$ and alike. We will only prove the more general versions below which cover the ones stated above for the special case $l_0=\infty$ and $A=C\cdot L^2$. 
\section{Plateau's problem and intrinsic minimal discs}
\label{2}
\subsection{Reminder: Metric space valued Sobolev maps}
\label{2.1}
We give a short reminder on metric space valued Sobolev maps. For more details see for example \cite{LW17a}, \cite{KS93} and \cite{Res97}.\par
Let $\Do:=\{v\in \R^2 | \ |v|\leq 1\}$ and $\K:=\{v \in \R^2| \ |v|=1 \}$, $X$ a complete separable metric space and $p\in(1,\infty)$. A measurable map $u: \Do \rightarrow X$ belongs to $L^p(\Do,X)$ if for every $1$-Lipschitz map $f:X\rightarrow \R$ the composition $f\circ u$ belongs to $L^p(\Do)$. A map $u\in L^p(\Do,X)$ belongs to $W^{1,p}(\Do,X)$ if for every $1$-Lipschitz map $f:X\rightarrow \R$ the composition $f\circ u$ lies in the classical Sobolev space $W^{1,p}(\Do)$ and the \textit{Reshtnyak p-energy} of $u$
\begin{equation}
\label{e2}
E^p_+(u):=\inf \left\{||g||^p_{L^p(\Do)}\ \Big|\ g \in L^p(\Do):\forall f\in \tn{Lip}_1(X,\R): |\nabla (f\circ u)|\leq g\tn{ a.e.}\right\}
\end{equation}
is finite. \par
We say that a map $u:\Do\rightarrow X$ satisfies \textit{(Lusin's) property (N)} if for every $N\subset \Do$ with $\Leb^2(N)=0$ one has $\mathcal{H}^2(u(N))=0$. For $p>2$ the following Sobolev embedding theorem holds, see for example~\cite{LW17a}.
\begin{Thm}
\label{t4}
Let $p>2$ and $\alpha:=1-\frac{2}{p}$. If $u \in W^{1,p}(\Do,X)$, then $u$ has a representative $\bar{u} \in C^\alpha(\Dc,X)$ satisfying Lusin's property (N). Furthermore there is a constant $M=M(p)$ such that
\begin{equation}
\label{e3}
||u||_\alpha:=\sup_{x,y \in \Dc} \frac{d(u(x),u(y))}{|x-y|^\alpha}\leq M\cdot \left(E^p_+(u)\right)^{\frac{1}{p}}.
\end{equation}
\end{Thm}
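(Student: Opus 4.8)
The plan is to reduce the metric-valued statement to the classical Morrey embedding $W^{1,p}(\Do)\hookrightarrow C^\alpha$ applied to the real-valued compositions $f\circ u$, exploiting that a single upper gradient $g$ works simultaneously for all $1$-Lipschitz $f$. Fix $u\in W^{1,p}(\Do,X)$ and, for $\varepsilon>0$, choose $g\in L^p(\Do)$ with $|\nabla(f\circ u)|\leq g$ a.e.\ for every $f\in \tn{Lip}_1(X,\R)$ and $\|g\|_{L^p}^p\leq E^p_+(u)+\varepsilon$. For $w\in X$ write $f_w:=d(\cdot,w)\in \tn{Lip}_1(X,\R)$; then $f_w\circ u\in W^{1,p}(\Do)$ has $g$ as an upper gradient, and the key point is that the relation $d(u(x),u(y))=\sup_w |f_w(u(x))-f_w(u(y))|$, with the supremum attained at $w=u(y)$, lets one recover the metric distortion of $u$ from these scalar functions.

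First I would establish the local oscillation estimate. Applying the Poincaré inequality on a ball $B=B(z,r)\subset\Do$ to $v=f_w\circ u$ and using $|\nabla v|\leq g$ yields $\fint_B |v-v_B|\leq C r\fint_B g$, and the standard dyadic telescoping of averages, legitimate since $1-2/p>0$, gives at every Lebesgue point $x$ of $v$
\[
|f_w(u(x))-(f_w\circ u)_B|\leq C(p)\,r^{1-2/p}\,\|g\|_{L^p(B)},
\]
with a constant independent of $w$. To deal with the fact that the relevant full-measure set depends on $w$, I would fix a countable dense subset $\{w_k\}\subseteq X$, intersect the corresponding full-measure sets to a common set $G$ of density one, and note that on $G$ each $f_{w_k}\circ u$ is determined by its averages. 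For $x,y\in G$, comparing through a ball of radius $r\simeq|x-y|$ containing both points gives $|f_{w_k}(u(x))-f_{w_k}(u(y))|\leq C(p)\,|x-y|^{1-2/p}\,\|g\|_{L^p}$ for every $k$; taking the supremum over $k$ and invoking density together with the displayed characterization of the distance produces $d(u(x),u(y))\leq C(p)\,|x-y|^\alpha\,\|g\|_{L^p}$. Letting $\varepsilon\to 0$ turns $\|g\|_{L^p}$ into $(E^p_+(u))^{1/p}$. Since $G$ is dense and $X$ is complete, $u|_G$ extends uniquely to a representative $\bar u\in C^\alpha(\Dc,X)$ satisfying \eqref{e3}.

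It remains to verify Lusin's property (N); here Hölder continuity alone is insufficient and the Sobolev structure must be used. Given $N\subset\Do$ with $\Leb^2(N)=0$ and $\delta>0$, I would cover $N$ by balls $B_i=B(x_i,r_i)$ with $\sum_i r_i^2<\delta$, and by the Besicovitch covering theorem arrange that the dilated balls $\{2B_i\}$ have bounded overlap. The local estimate above, applied within each ball, bounds the oscillation of $\bar u$ on $B_i$ by $C r_i^{1-2/p}(\int_{2B_i}g^p)^{1/p}$, whence
\[
\mathcal{H}^2(\bar u(B_i))\leq C\,(\tn{diam}\,\bar u(B_i))^2\leq C\,r_i^{2-4/p}\left(\int_{2B_i}g^p\right)^{2/p}.
\]
Summing and applying Hölder's inequality with exponents $\tfrac{p}{p-2}$ and $\tfrac{p}{2}$, for which $r_i^{(2-4/p)\cdot p/(p-2)}=r_i^2$, gives $\mathcal{H}^2(\bar u(N))\leq C(\sum_i r_i^2)^{(p-2)/p}(\sum_i\int_{2B_i}g^p)^{2/p}\leq C\,\delta^{(p-2)/p}\,\|g\|_{L^p}^2$ by the bounded overlap. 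Letting $\delta\to 0$ yields $\mathcal{H}^2(\bar u(N))=0$.

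The main obstacle is the interplay between the $w$-dependence of the null sets and the passage from the scalar estimates back to the genuinely metric quantity $d(u(x),u(y))$: one must secure a single full-measure set on which all compositions $f_{w_k}\circ u$ are simultaneously controlled and on which the supremum characterization of the distance is valid, rather than working with each $1$-Lipschitz function in isolation. Once this is in place, both the Hölder bound and property (N) follow from the uniform local oscillation estimate, the latter requiring only the additional covering-plus-Hölder bookkeeping sketched above.
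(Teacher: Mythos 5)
The paper itself contains no proof of theorem~\ref{t4}: it is quoted as background from \cite{LW17a}. So your proposal can only be measured against the cited literature, and in outline it follows exactly the standard route used there: reduce to the real-valued functions $f_w\circ u$ with $f_w=d(\cdot,w)$, exploit that one admissible $g$ in \eqref{e2} works for all $f\in\tn{Lip}_1(X,\R)$ simultaneously, pass to a countable dense family $\{w_k\}$ (legitimate, since $X$ is assumed separable) to get a single full-measure set $G$, and recover $d(u(x),u(y))=\sup_k|f_{w_k}(u(x))-f_{w_k}(u(y))|$. That reduction, and the $\varepsilon\to 0$ bookkeeping, are correct. The problems are in the two places where you actually run the Morrey iteration, and both stem from the same source: you only use balls $B\subset\Do$ together with the plain Poincar\'e inequality.

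First, the H\"older bound. Estimate \eqref{e3} is a supremum over $x,y\in\Dc$, but for points near $\K$ the ball you need does not exist: if $x,y$ lie at distance $\epsilon^2$ from $\K$ with $|x-y|=\epsilon$, then a ball $B(z,r)\subset\Do$ containing both forces (placing $z$ optimally on the bisector at radius $1-r$) the inequality $(r-\epsilon^2)^2+\epsilon^2/4\le r^2$, i.e.\ $r\ge \epsilon^2/2+1/8$. So "a ball of radius $r\simeq|x-y|$ containing both points" is unavailable, and your argument only yields an interior H\"older estimate whose constant degenerates at the boundary, which is strictly weaker than \eqref{e3}. The standard repair is the oscillation form of Morrey's inequality on \emph{convex} sets: for convex $U$ with $\Leb^2(U)\ge c\,(\tn{diam}\,U)^2$ one has $|v(z)-v_U|\le C\int_U |\nabla v(w)|\,|z-w|\inv\,\tr w$ for a.e.\ $z\in U$, hence $\tn{osc}_U\, v\le C(p)\,(\tn{diam}\,U)^{1-2/p}\|\nabla v\|_{L^p(U)}$. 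Applying this with $U=B(x,2|x-y|)\cap\Do$, which is convex of non-degenerate area because $\Do$ is convex, gives the estimate uniformly up to $\K$.

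Second, property (N). The Besicovitch covering theorem bounds the overlap of the selected balls $B_i$ themselves; it does not and cannot bound the overlap of the dilated balls $2B_i$, because bounded overlap is destroyed by dilation: the balls $B((4^{-k},0),4^{-k}/2)$, $k\in\N$, are pairwise disjoint, yet every point $(\delta,0)$ with $\delta>0$ small lies in $2B_k$ for all $k$ with $4^{-k}>\delta$, so the doubled family has unbounded overlap. Consequently the key inequality $\sum_i\int_{2B_i}g^p\le K\,\|g\|_{L^p}^p$ is unjustified as stated. This step can be repaired, but it needs a different device: either cover $N$ by an open set $U$ of small measure and use its Whitney decomposition (for Whitney cubes $Q_j$ the dilates $2Q_j$ do have bounded overlap and stay in $U$), or — better — use the convex-set oscillation estimate above, which controls $\tn{diam}\,\bar u(B_i)$ by $\|g\|_{L^p(B_i)}$ with \emph{no} dilation, after which disjointness or Besicovitch overlap of the $B_i$ alone makes your H\"older-summation argument go through (and even gives the stronger conclusion via $\int_U g^p\to 0$). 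With these two repairs, your proof is complete and matches the standard one.
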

If $u\in W^{1,2}(\Do,X)$, then $u$ is approximately metrically differentiable almost everywhere. That is for almost every $x \in \Do$ there exists a seminorm $\tn{apmd}_x u$ on $\R^2$ such that
\begin{equation}
\label{e4}
\tn{aplim}_{y\rightarrow x} \frac{d(u(y),u(x))-\left(\tn{apmd}_x u\right)(y-x)}{|y-x|}=0.
\end{equation}
Define the \textit{(Busemann) area} $\tn{Area}(u)\in [0, \infty)$ of $u \in W^{1,2}(\Do,X)$ by
\begin{equation}
\label{e5}
\tn{Area}(u):=\int_\Do J(\tn{apmd}_x u)\ \tr \Leb^2(x)\leq E^2_+(u)
\end{equation}
where for a seminorm $\sigma$ on $\R^2$ one sets its \textit{(Busemann) Jacobian} $J(\sigma)$ to be
\begin{equation}
\label{e6}
J(\sigma):=\frac{\pi}{\Leb^2(\{v\in \R^2|\sigma(v)\leq 1\})}.
\end{equation}
It follows from the definition that for a Möbius transform $m:\Do\rightarrow \Do$ one has $\tn{Area}(u\circ m)=\tn{Area}(u)$ and $E^2_+(u\circ m)=E^2_+(u)$. We have the following variant of the area formula which especially shows, that our definition extends the one used in Riemannian geometry.
\begin{Thm}[\cite{Kar07}]
\label{t5}
If $u\in W^{1,2}(\Do,X)$ satisfies Lusin's property (N), then
\begin{equation}
\label{e7}
\tn{Area}(u)=\int_X \tn{card}( u\inv (x)) \ \tr  \mathcal{H}^2(x).
\end{equation}
\end{Thm}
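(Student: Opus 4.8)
The plan is to reduce the claimed identity to the classical area formula for bi-Lipschitz maps by means of a measurable decomposition of the domain adapted to the approximate metric differential, handling the degenerate locus separately. This is essentially Kirchheim's and Karmanova's strategy.

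First I would split $\Do$ into the set $S$ on which $\sigma_x:=\tn{apmd}_x u$ exists and is a genuine norm, the set $T$ on which $\sigma_x$ exists but is a degenerate seminorm, and a remaining $\Leb^2$-null set $N$. On $N$ Lusin's property (N) gives $\mathcal{H}^2(u(N))=0$. On $T$ one has $J(\sigma_x)=0$ directly from the definition of the Busemann Jacobian, since a nontrivial kernel vector $v$ forces $\{\sigma_x\le 1\}$ to contain the whole line $\R v$ and hence to have infinite $\Leb^2$-measure; thus $T$ contributes nothing to the left-hand side. I would then show $\mathcal{H}^2(u(T))=0$ as well. Because $\sigma_x$ has rank at most one on $T$, an Egorov/Lusin argument upgrading the approximate limit in the definition of apmd to a uniform estimate on countably many pieces shows that the $u$-image of a small ball around a point of $T$ fits into a rectangle of area of order $\varepsilon r^2$; covering $T$ and letting $\varepsilon\to 0$ yields $\mathcal{H}^2(u(T))=0$. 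Consequently both sides are unchanged if all preimage counts are restricted to $S$.

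Next, on the non-degenerate set $S$ I would apply a Lusin-type approximation: for fixed $\varepsilon>0$ there is a countable Borel partition $S=\bigcup_i E_i$ (up to a null set) together with norms $s_i$ on $\R^2$ such that $u|_{E_i}$ is bi-Lipschitz with
\[
(1-\varepsilon)\, s_i(x-y)\le d(u(x),u(y))\le (1+\varepsilon)\, s_i(x-y)\qquad (x,y\in E_i),
\]
and with $\sigma_x$ uniformly $\varepsilon$-close to $s_i$ on $E_i$. The elementary fact driving the computation is that the Busemann Jacobian is exactly the density relating $\Leb^2$ to Hausdorff measure on a normed plane: for every norm $s$ one has $\mathcal{H}^2_s(A)=J(s)\,\Leb^2(A)$, since with the standard normalization of Hausdorff measure $\mathcal{H}^2_s=(\pi/\Leb^2(\{s\le 1\}))\,\Leb^2$. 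Hence on each piece the bi-Lipschitz estimate pinches $\mathcal{H}^2(u(E_i))$ between $(1\mp\varepsilon)^2 J(s_i)\,\Leb^2(E_i)$, which is $(1+O(\varepsilon))\int_{E_i}J(\sigma_x)\,\tr\Leb^2(x)$.

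Finally I would sum over $i$. As the $E_i$ are disjoint and $u$ is injective on each of them, a point of $X$ lies in $u(E_i)$ for exactly as many indices $i$ as it has preimages in $S$, so $\sum_i\mathbbm{1}_{u(E_i)}=\tn{card}(u\inv(\cdot)\cap S)$ pointwise; summing $\mathcal{H}^2(u(E_i))=\int_X\mathbbm{1}_{u(E_i)}\,\tr\mathcal{H}^2$ gives
\[
\int_X \tn{card}(u\inv(x)\cap S)\,\tr\mathcal{H}^2(x)=(1+O(\varepsilon))\int_S J(\sigma_x)\,\tr\Leb^2(x)=(1+O(\varepsilon))\,\tn{Area}(u).
\]
Letting $\varepsilon\to 0$ and using the first step to replace $u\inv(\cdot)\cap S$ by $u\inv(\cdot)$ yields the theorem. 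The main obstacle is the measurable bi-Lipschitz decomposition of $S$: establishing it rigorously requires combining approximate metric differentiability with an Egorov-type argument to control the metric distortion uniformly on each piece, the remainder being the classical area formula together with the bookkeeping above.
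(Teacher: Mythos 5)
The paper gives no proof of Theorem~\ref{t5} at all—it is quoted directly from \cite{Kar07}—and your proposal reproduces essentially the argument of that source (Kirchheim's strategy extended by Karmanova to Sobolev maps): split the domain by the rank of the approximate metric differential, dispose of the exceptional null set via property~(N) and of the degenerate set via the covering argument, and reduce the nondegenerate set to the area formula on normed planes through a Lusin-type bi-Lipschitz decomposition, so your approach is correct and matches the cited proof. The only step I would tighten is the identity $\mathcal{H}^2_s=J(s)\,\Leb^2$ on a normed plane, which your ``since'' clause states circularly: it is not a normalization convention but Busemann's theorem (equivalently, the isodiametric inequality in normed planes, proved via Brunn--Minkowski), and it is precisely the reason the Busemann Jacobian of \eqref{e6} is the correct one for \eqref{e7}.
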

For $u\in W^{1,2}(\Do,X)$ there is a canonical almost everywhere defined \textit{trace} map $\tn{tr}(u)\in L^2(\K, X)$. We write $u\in C^0(\Dc,X)$ if $u$ extends to a continuous map $\bar{u}:\Dc\rightarrow X$. In this case one may simply take $\tn{tr}(u)=u_{|\K}$.\par  
Let $I=[a,b]$ or $I=\K$. Let $\alpha,\beta:I\rightarrow X$ and $h:I\times [0,1] \rightarrow X$ be Lipschitz maps. We say that $h$ is a Lipschitz homotopy from $\alpha$ to $\beta$ if $h(-,0)=\alpha$ and $h(-,1)=\beta$. In this case we write $h:\alpha \rightarrow \beta$. The area of such a Lipschitz homotopy $h$ may be defined similarly as discussed here for Sobolev maps on the disc, see \cite{Kir94}. The following glueing lemma is very useful and will be used repeatedly, see for example section $2.2$ in \cite{LW16}.
\begin{Lem}
\label{l1}
Let $\gamma,\eta:\K\rightarrow X$ be closed Lipschitz curves and $h:\gamma \rightarrow \eta$ a Lipschitz homotopy. For $u \in W^{1,2}(\Do,X)$ such that $\tn{tr}(u)=\gamma$, there exists $v\in W^{1,2}(\Do,X)$ such that $\tn{tr}(v)=\eta$ and
\begin{equation}
\tn{Area}(v)=\tn{Area}(h)+\tn{Area}(u).
\end{equation}
\end{Lem}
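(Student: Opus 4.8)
The plan is to realise the homotopy $h$ as a collar glued onto the boundary circle of the disc carrying $u$. I would split $\Do$ into the inner disc $\Do_{1/2}:=\{v\in\R^2 : |v|\le\Ha\}$ and the outer annulus $A:=\{v\in\R^2 : \Ha\le|v|\le 1\}$, place a rescaled copy of $u$ on $\Do_{1/2}$ and the homotopy $h$ on $A$, and observe that the two pieces match along the circle $\{|v|=\Ha\}$, since there both of them read off the curve $\gamma$.

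Concretely, fix the homothety $\psi:\Do\to\Do_{1/2}$, $\psi(v)=\Ha v$, and a bi-Lipschitz homeomorphism $\chi:A\to\K\times[0,1]$ which in polar coordinates is $(r,\theta)\mapsto(\theta,2r-1)$, so that it maps the inner circle $\{|v|=\Ha\}$ onto $\K\times\{0\}$ and the outer circle $\K$ onto $\K\times\{1\}$. Define $v:=u\circ\psi\inv$ on $\Do_{1/2}$ and $v:=h\circ\chi$ on $A$. On the interface $\{|v|=\Ha\}$ the trace of $u\circ\psi\inv$ is $\gamma$, while $h\circ\chi$ restricts there to $h(-,0)=\gamma$, so the two formulas are compatible. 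By construction the trace of $v$ on $\K$ is $h(-,1)=\eta$, as required.

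The main technical point is to check that $v\in W^{1,2}(\Do,X)$. For every $1$-Lipschitz $f:X\to\R$ the functions $f\circ(u\circ\psi\inv)$ and $f\circ(h\circ\chi)$ lie in $W^{1,2}$ of the respective pieces, and their traces along $\{|v|=\Ha\}$ agree, both being $f\circ\gamma$. By the classical gluing lemma for scalar Sobolev functions across a Lipschitz interface, it follows that $f\circ v\in W^{1,2}(\Do)$ and that no singular part of the gradient is created on the interface; a common upper gradient of the family $f\circ v$ is obtained by patching those of the two pieces, whence $E^2_+(v)<\infty$ and $v\in W^{1,2}(\Do,X)$. It is precisely the coincidence of the traces that rules out a distributional jump across the interface, and I expect this to be the delicate step.

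It remains to verify the area identity. Since $\Do_{1/2}$ and $A$ overlap only in the null set $\{|v|=\Ha\}$, the integral defining $\tn{Area}(v)$ splits into the contributions of the two pieces, and on the interior of each piece $\tn{apmd}_x v$ coincides with that of the relevant map. To see that neither reparametrisation affects the area, note that for any bi-Lipschitz $\phi$ Rademacher's theorem and the chain rule for approximate metric differentials give $\tn{apmd}_x(w\circ\phi)=(\tn{apmd}_{\phi(x)}w)\circ D\phi_x$ for almost every $x$, while $J(\sigma\circ L)=|\det L|\cdot J(\sigma)$ for a linear map $L$ is immediate from~\eqref{e6}. Together with the change of variables formula this yields $\tn{Area}(w\circ\phi)=\tn{Area}(w)$. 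Applied to $\psi$ and $\chi$ it shows that the inner piece contributes $\tn{Area}(u)$ and the annular piece contributes $\tn{Area}(h)$, so that $\tn{Area}(v)=\tn{Area}(h)+\tn{Area}(u)$.
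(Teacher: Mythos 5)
The paper does not prove this lemma itself but cites it from section~2.2 of \cite{LW16}, and your collar construction --- rescaling $u$ onto the half-radius disc, mapping the annulus bi-Lipschitzly onto $\K\times[0,1]$ to carry $h$, matching traces along the interface, and using bi-Lipschitz invariance of the Busemann area via $J(\sigma\circ L)=|\det L|\cdot J(\sigma)$ --- is precisely the standard argument used there. Your proof is correct and takes essentially the same approach as the cited source.
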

Also the following lower semicontinuouity result will be needed at some point.
\begin{Thm}[\cite{LW17a}, \cite{Res97}]
\label{t6}
Let $p\geq 2$, $E \geq 0$ and $(u_j)\subset W^{1,p}(\Do,X)$. If $E^p_+(u_j)\leq E$ for all $j$ and $u_j\rightarrow u\in L^p(\Do,X)$, then $u \in W^{1,p}(\Do,X)$ and 
\begin{equation}
\label{e8}
\tn{Area}(u)\leq \liminf \tn{Area}(u_j).
\end{equation} 
\end{Thm}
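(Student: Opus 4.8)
The statement contains two assertions: that the limit map $u$ lies in $W^{1,p}(\Do,X)$, and that the Busemann area is lower semicontinuous along the sequence. The first is essentially Reshetnyak's lower semicontinuity of energy; the second is the substantive point. Throughout I would work through the scalar compositions $f\circ u$ for $f\in\tn{Lip}_1(X,\R)$, using that for almost every $x$ the approximate metric derivative is recovered as
\[
(\tn{apmd}_x u)(v)=\sup_{f\in\tn{Lip}_1(X,\R)}\partial_v(f\circ u)(x),
\]
the supremum being attained at $f=d(\,\cdot\,,u(x))$. Since $X$ is separable, fixing a dense sequence $(x_k)$ and putting $f_k:=d(\,\cdot\,,x_k)$ gives a countable family with $(\tn{apmd}_x u)(v)=\sup_k|\partial_v(f_k\circ u)(x)|$ for a.e.\ $x$ and all $v$, so that both the energy and the area are determined by the gradient fields $\nabla(f_k\circ u)$.

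For the membership, fix $f\in\tn{Lip}_1(X,\R)$ and let $g_j\in L^p(\Do)$ be almost minimal upper gradients for $u_j$, so $|\nabla(f\circ u_j)|\le g_j$ a.e.\ and $\|g_j\|_{L^p}^p\le E^p_+(u_j)+j\inv\le E+1$. Because $u_j\to u$ in $L^p(\Do,X)$ and $|f\circ u_j-f\circ u|\le d(u_j,u)$ pointwise, we have $f\circ u_j\to f\circ u$ in $L^p(\Do)$; as $\nabla(f\circ u_j)$ is bounded in $L^p(\Do,\R^2)$ and $p>1$, a subsequence converges weakly, forcing $f\circ u\in W^{1,p}(\Do)$ with $\nabla(f\circ u)$ the weak limit. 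Passing to a further subsequence I would let $g$ be a weak $L^p$-limit of the $g_j$, so that $\|g\|_{L^p}^p\le\liminf E^p_+(u_j)\le E$. The pointwise bound $|\nabla(f\circ u)|\le g$ does not survive weak convergence directly, so here I would invoke Mazur's lemma: a common sequence of convex combinations of the pairs $(\nabla(f\circ u_j),g_j)$ converges strongly, hence along a subsequence a.e., and the inequality $|\sum_i\lambda_i\nabla(f\circ u_{j_i})|\le\sum_i\lambda_i g_{j_i}$ passes to the limit. This produces a single $g$ serving as an upper gradient for every $f\circ u$ simultaneously, whence $u\in W^{1,p}(\Do,X)$ with $E^p_+(u)\le E$.

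For the area inequality the two obstructions are that the Busemann Jacobian $J$ from~\eqref{e6} is not convex in $\tn{apmd}_x u$ and that the seminorms $\tn{apmd}_x u_j$ need not converge pointwise. The mechanism I would exploit is the null-Lagrangian structure of planar Jacobians, which is precisely where the hypothesis $p\ge 2$ enters. For $f,g\in\tn{Lip}_1(X,\R)$ the maps $(f\circ u_j,g\circ u_j)$ are bounded in $W^{1,p}(\Do,\R^2)$, converge weakly to $(f\circ u,g\circ u)$ along the subsequence fixed above, and by compensated compactness their distributional Jacobians converge, giving
\[
\int_\Do\bigl|\det D(f\circ u,g\circ u)\bigr|\,\tr\Leb^2\le\liminf_j\int_\Do\bigl|\det D(f\circ u_j,g\circ u_j)\bigr|\,\tr\Leb^2.
\]

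It remains to tie these projected Jacobians to the Busemann area. I would establish an integral-geometric representation of $J$ showing that, for a suitable weighting, the supremum over finite families of $1$-Lipschitz maps $P_i=(f_i,g_i):X\to\R^2$ into the Euclidean plane of the sums $\sum_i\int_\Do|\det D(f_i\circ w,g_i\circ w)|\,\tr\Leb^2$ equals $\tn{Area}(w)$, while each such sum is at most $\tn{Area}(w)$ because a $1$-Lipschitz projection to the Euclidean plane does not increase area (Theorem~\ref{t5}). Granting this, each finite-family functional is weakly lower semicontinuous by the displayed inequality and bounded above by $\tn{Area}(u_j)$, so taking first the liminf in $j$ and then the supremum over families yields $\tn{Area}(u)\le\liminf_j\tn{Area}(u_j)$. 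I expect the main obstacle to be exactly this representation step: matching the supremum of projected planar Jacobians to the Busemann Jacobian with the correct constant, which is where the specific choice of area definition enters and which is the part one must ultimately take from \cite{Res97} and \cite{LW17a}.
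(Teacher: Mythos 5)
The paper offers no proof of Theorem~\ref{t6}; it is quoted verbatim from \cite{LW17a} and \cite{Res97} and used as a black box, so your attempt can only be measured against the argument in those references. Your first half is essentially correct and standard: strong $L^p$-convergence of the compositions $f\circ u_j$, weak compactness of the gradients and of near-optimal upper gradients $g_j$, and Mazur's lemma applied to the pairs $(\nabla(f\circ u_j),g_j)$ do yield $u\in W^{1,p}(\Do,X)$ with $E^p_+(u)\leq E$ (the exceptional null set in \eqref{e2} is allowed to depend on $f$, so no uniformization over $f$ is needed). This matches the Reshetnyak-type part of the cited proofs.

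The genuine gap is exactly where you predicted it, and it is fatal in the form you propose: the integral-geometric representation of the Busemann area as a supremum of projected planar Jacobians is \emph{false}. The bound is pointwise: if $P:(\R^2,\sigma)\to\R^2$ is linear and $1$-Lipschitz into the Euclidean plane, then $P(B_\sigma)$ lies in the unit disc, so $|\det P|=\pi/\Leb^2\bigl(P^{-1}(D)\bigr)$ and
\begin{equation}
\sup\left\{|\det P| \,:\, P \ \tn{1-Lipschitz}\right\}=\frac{\pi}{\Leb^2(E_\sigma)},
\end{equation}
where $E_\sigma$ is the ellipse of least area containing $B_\sigma$. This equals $J(\sigma)=\pi/\Leb^2(B_\sigma)$ only when $B_\sigma$ is itself an ellipse; for $\sigma=\|\cdot\|_\infty$ one gets $1/2$ versus $J(\sigma)=\pi/4$. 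Since the deficiency is pointwise in $x$, no weighting, finite family, or partition of $\Do$ can close it: your functional $\sup_{\tn{families}}\sum_i\int|\det D(P_i\circ w)|$ computes the strictly smaller ``circumscribed-ellipse'' area, so the chain $\tn{Area}(u)=\sup F(u)\leq\liminf$ breaks at the first equality, even though your compensated-compactness step (weak continuity of planar Jacobians under $W^{1,2}$-bounds, where $p\geq 2$ indeed enters) is sound. The actual proof in \cite{LW17a} derives semicontinuity from \emph{quasiconvexity} of the Busemann Jacobian, which is a deep theorem of Burago and Ivanov (semi-ellipticity of the Busemann area, i.e.\ affine discs minimize among Lipschitz fillings in normed planes); this input cannot be recovered from $1$-Lipschitz projections to the Euclidean plane, precisely because of the ellipse gap above. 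So your scheme would prove the theorem for the ``inscribed/circumscribed Riemannian'' definitions of area, but for the Busemann area the representation step must be replaced by the Burago--Ivanov quasiconvexity argument.
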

\subsection{Plateau's problem in metric spaces}
Call a continuous map $f:X\rightarrow Y$ between metric spaces $X,Y$ \textit{monotone} if $f$ is continuous, surjective and $f\inv(y)$ is connected for every $y \in Y$.\par
Let $X$ be a complete metric space and $\Gamma$ a closed Jordan curve in $X$. Let $\Lambda(\Gamma,X)$ be the set of those $u \in W^{1,2}(\Do,X)$ such that $\tn{tr}(u)\in L^2(\K,X)$ has a representative which is a monotone map $\K\rightarrow \Gamma$. Following \cite{LW18a} we call $u \in \Lambda(\Gamma,X)$ a \textit{solution of Plateau's problem} for $\Gamma$ in $X$ if
\begin{equation}
\label{e9}
\tn{Area}(u)=\inf_{v\in \Lambda(\Gamma,X)} \tn{Area}(v)
\end{equation} 
and $u$ minimizes $E^2_+(u)$ among all $u \in \Lambda(\Gamma,X)$ satisfying \eqref{e9}.
\begin{Thm}[\cite{LW17a}]
\label{t7}
Let $X$ be a proper metric space and $\Gamma$ a closed Jordan curve in $X$. If $\Lambda(\Gamma,X)\neq \emptyset$, then there exists a solution~$u$ of Plateau's problem for~$\Gamma$ in~$X$. Every such $u$ satisfies
\begin{equation}
\label{e10}
E^2_+(u)\leq 2\cdot \tn{Area}(u).
\end{equation}
\end{Thm}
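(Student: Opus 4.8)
The plan is to establish existence by the direct method applied to the area functional, and then to deduce the energy bound \eqref{e10} from the reparametrization invariance of area together with an inner-variation argument. Throughout I write $I_+(\sigma):=\max_{v\in\K}\sigma(v)$ for the maximal stretch of a seminorm $\sigma$ on $\R^2$, and recall that the Reshetnyak energy then takes the pointwise form $E^2_+(u)=\int_\Do I_+(\tn{apmd}_x u)^2\ \tr\Leb^2$, while by \eqref{e5}--\eqref{e6} the area density of $u$ at $x$ is $J(\tn{apmd}_x u)$.

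\textbf{Existence.} Set $a:=\inf_{v\in\Lambda(\Gamma,X)}\tn{Area}(v)$, which is finite since $\Lambda(\Gamma,X)\neq\emptyset$, and pick $u_j\in\Lambda(\Gamma,X)$ with $\tn{Area}(u_j)\to a$. The first task is a uniform energy bound. Since area is invariant under reparametrization, I would replace each $u_j$ by a nearly conformal reparametrization $\tilde u_j=u_j\circ\psi_j$ with $\tn{Area}(\tilde u_j)=\tn{Area}(u_j)$ and $E^2_+(\tilde u_j)\leq 2\,\tn{Area}(u_j)+1$, hence a uniform bound $E^2_+(\tilde u_j)\leq E$; producing such $\psi_j$ is the metric analogue of uniformization. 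After normalizing by a three-point condition on $\K$ to remove the noncompact conformal gauge, a Courant--Lebesgue argument bounds the boundary oscillation of $\tn{tr}(\tilde u_j)$ in terms of $E$, and a Morrey-type energy decay coming from near minimality controls the interior oscillation; together these give a uniform modulus of continuity. Properness of $X$ then allows Arzel\`a--Ascoli to extract a subsequence converging uniformly to a continuous $u$, the three-point condition ensures the limit trace is still a monotone parametrization of $\Gamma$, so $u\in\Lambda(\Gamma,X)$, and Theorem~\ref{t6} gives $\tn{Area}(u)\leq a$, hence $\tn{Area}(u)=a$. Minimizing $E^2_+$ over the set of area minimizers, again by the direct method and with Theorem~\ref{t6} keeping this set closed, produces the desired solution $u$.

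\textbf{Energy bound.} Here I would use that $\tn{Area}$ is reparametrization invariant: by the area formula Theorem~\ref{t5} it counts the image with multiplicity, so $\tn{Area}(u\circ\phi)=\tn{Area}(u)$ for every orientation-preserving homeomorphism $\phi$ of $\Dc$ restricting to a homeomorphism of $\K$, and each such $u\circ\phi$ again minimizes area. As $u$ minimizes $E^2_+$ among area minimizers, it minimizes $E^2_+$ over all its reparametrizations, so it is stationary under inner variations. Testing this with compactly supported vector fields on $\Do$ forces, for almost every $x$, the seminorm $\sigma=\tn{apmd}_x u$ into an optimal position in which no area-preserving linear precomposition decreases $I_+(\sigma)$. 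By the planar John-ellipse estimate every symmetric convex body can be mapped by a volume-preserving linear map onto one containing a Euclidean disc of at least half its area; applied to the unit ball of $\sigma$ this is exactly the pointwise quasiconformality $I_+(\sigma)^2\leq 2\,J(\sigma)$, the constant $2$ being the John factor. Integrating over $\Do$ gives $E^2_+(u)=\int_\Do I_+(\tn{apmd}_x u)^2\leq 2\int_\Do J(\tn{apmd}_x u)=2\,\tn{Area}(u)$, which is \eqref{e10}.

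\textbf{Main obstacle.} The crux is the uniform energy bound, i.e.\ the existence of the nearly conformal reparametrizations $\psi_j$: one must reparametrize an arbitrary finite-energy disc so that its energy drops to essentially twice its (reparametrization-invariant) area, a genuine quasiconformal uniformization problem in the absence of any smooth structure on the target. A second, more technical point is that the natural regularity is only $W^{1,2}$, exactly the borderline of the Sobolev embedding Theorem~\ref{t4} (valid for $p>2$); equicontinuity must therefore be extracted from the Courant--Lebesgue lemma and interior energy decay rather than directly from \eqref{e3}, and one must verify that the approximate metric differentiability and Lusin property~(N) underlying both the inner-variation computation and the reparametrization invariance of $\tn{Area}$ do hold for the minimizer.
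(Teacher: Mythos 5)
First, a point of reference: the paper itself gives no proof of Theorem~\ref{t7}; it is quoted wholesale from \cite{LW17a}, so your proposal has to be measured against the proof there. Your \emph{energy bound} half is in fact close to that proof: in \cite{LW17a} the estimate \eqref{e10} is derived by showing that any $u$ minimizing $E^2_+$ within a class of competitors that is invariant under precomposition with biLipschitz homeomorphisms of $\Dc$ is $\sqrt{2}$-quasiconformal almost everywhere, and the linear-algebraic core is exactly your John-position argument: if no area-preserving linear precomposition decreases $I_+(\sigma)$, then putting the unit ball of $\sigma$ in John position yields $I_+(\sigma)^2\leq 2\,J(\sigma)$, and integration gives \eqref{e10}. (Even here you gloss over real work: global minimality among reparametrizations does not literally localize to a pointwise condition at a.e.\ point; one must run inner variations along flows of vector fields and cope with the non-smoothness of $\sigma\mapsto I_+(\sigma)^2$, which \cite{LW17a} does carefully. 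But the idea is the right one.)

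The genuine gap is in your existence argument, and it is precisely the step you yourself flag as the main obstacle: the nearly conformal reparametrizations $\psi_j$ with $E^2_+(u_j\circ\psi_j)\leq 2\,\tn{Area}(u_j)+1$ are not an available tool. This is Morrey's $\epsilon$-conformality lemma, and its classical proof is irreducibly two-dimensional-smooth: it uniformizes the pullback metric via the (measurable) Riemann mapping theorem. For an arbitrary $W^{1,2}$ map into a metric space the approximate metric derivative is a field of possibly degenerate, non-quadratic seminorms, and no uniformization theorem applies; no reparametrization lemma of this kind is known in that generality, and circumventing it is exactly the point of \cite{LW17a}. Worse, as structured your proof is circular: the only maps for which one knows a parametrization with energy at most twice the area are (constrained) energy minimizers themselves, and that knowledge is the \emph{output} of the inner-variation argument you reserve for the final step, not an a priori fact about arbitrary competitors $u_j$.

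The repair, which is the route of \cite{LW17a}, is to intertwine your two halves. Given an area-minimizing sequence $(v_k)\subset\Lambda(\Gamma,X)$, set $\Lambda_k:=\{v\in\Lambda(\Gamma,X)\,:\,\tn{Area}(v)\leq\tn{Area}(v_k)\}$ and let $u_k$ minimize $E^2_+$ in $\Lambda_k$; such $u_k$ exists by the direct method you describe (Courant--Lebesgue, three-point normalization, Arzel\`a--Ascoli via properness, with Theorem~\ref{t6} keeping $\Lambda_k$ closed). Since $\tn{Area}$ is invariant under precomposition with biLipschitz homeomorphisms, $\Lambda_k$ is reparametrization-invariant, so the inner-variation/John argument applies to $u_k$ and yields $E^2_+(u_k)\leq 2\,\tn{Area}(u_k)\leq 2\,\tn{Area}(v_k)$. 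This is the uniform energy bound you were missing; your compactness and semicontinuity arguments then produce an area minimizer, and minimizing $E^2_+$ among area minimizers (again a reparametrization-invariant class) produces a solution of Plateau's problem to which the same quasiconformality argument applies, giving \eqref{e10}.
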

Here a metric space $X$ is called \textit{proper} if closed and bounded subsets of $X$ are compact. To gain higher regularity of $u$ one has to impose additional conditions on $X$ and $\Gamma$.\par 
Let $C \in (0,\infty)$ and $l_0\in (0,\infty]$. We say that $X$ satisfies a \textit{$(C,l_0)$-quadratic isoperimetric inequality} if for every Lipschitz curve $\gamma:\K \rightarrow X$ satisfying $l(\gamma)<l_0$ there exists $u\in W^{1,2}(\Do,X)$ such that $\tn{tr}(u)=\gamma$ and 
\begin{equation}
\label{e11}
\tn{Area}(u)\leq C \cdot l(\gamma)^2.
\end{equation}
If $l_0=\infty$, we also simply say that $X$ satisfies a $C$-quadratic isoperimetric inequality. I turns out that a geodesic metric disc $Z$ satisfies a $C$-quadratic isoperimetric inequality iff every Jordan domain $U\subset Z$ satisfies $\mathcal{H}^2(U)\leq C \cdot l(\partial U)^2$, see~\cite{Crearb},~\cite{LW17b}. For example compact Riemannian and more generally Finsler manifolds, $\tn{CAT}(\kappa)$-spaces and Banach spaces satisfy some quadratic isoperimetric inequality, see \cite{LW17a}, \cite{Creara}.
\begin{Thm}[\cite{LW17a}, \cite{LW16}]
\label{t8}
Let $X$ be a complete metric space satisfying a $(C,l_0)$-quadratic isoperimetric inequality, $\Gamma$ a closed rectifiable Jordan curve in $X$ and $u$ a solution of Plateau's problem for~$\Gamma$ in~$X$.\par
Then $u$ has a representative $\bar{u}$ such that $\bar{u} \in C^0(\Dc,X)$ and $\bar{u}$ satisfies Lusin's property~(N). If furthermore $\Gamma$ is $\lambda$-chord-arc and $l(\Gamma)<l_0$, then one may find $p=p(C,\lambda)>2$ such that $\bar{u} \in W^{1,p}(\Do,X)$  and a Möbius transform $m:\Do\rightarrow \Do$ such that
\begin{equation}
\label{e12}
\left(E^p_+(\bar{u}\circ m)\right)^{\frac{1}{p}}\leq M\cdot \left(E^2_+(u)\right)^{\frac{1}{2}}
\end{equation}
where $M=M(C,\lambda)$.
\end{Thm}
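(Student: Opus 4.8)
The plan is to prove the three assertions in turn: interior Hölder continuity via a Morrey-type energy decay driven by the isoperimetric inequality and area minimality, Lusin's property~(N) from this continuity together with the local area control, and finally, under the chord-arc hypothesis, higher integrability by upgrading a boundary decay estimate through a Gehring-type self-improvement. Throughout I would write $g$ for the minimal generalized gradient of $u$, so that $E^2_+(u)=\int_\Do g^2\,\tr\Leb^2$, and exploit that the energy-minimizing solution is quasiconformal in the pointwise sense $g^2\leq 2\,J(\tn{apmd}_x u)$ almost everywhere, which is the infinitesimal form of \eqref{e10}.

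First I would establish the interior decay. Fix $x_0\in\Do$ and set $\phi(r):=\int_{B(x_0,r)\cap\Do}g^2\,\tr\Leb^2$. Writing the energy in polar coordinates about $x_0$ and invoking a Courant--Lebesgue argument, for almost every small $r$ the restriction of $u$ to $\partial B(x_0,r)$ is absolutely continuous with $l\big(u|_{\partial B(x_0,r)}\big)^2\leq 2\pi r\,\phi'(r)$, the tangential stretch being dominated by $g$. Since $X$ satisfies a $(C,l_0)$-quadratic isoperimetric inequality and this boundary curve is short, it can be filled by a Sobolev disc of area at most $C\cdot l(u|_{\partial B(x_0,r)})^2$; gluing that filling in place of $u|_{B(x_0,r)}$ via Lemma~\ref{l1} yields a competitor in $\Lambda(\Gamma,X)$ with unchanged trace, so area minimality forces $\tn{Area}(u|_{B(x_0,r)})\leq C\cdot l(u|_{\partial B(x_0,r)})^2$. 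Combining this with quasiconformality, namely $\phi(r)\leq 2\,\tn{Area}(u|_{B(x_0,r)})$, gives the differential inequality $\phi(r)\leq 4\pi C\,r\,\phi'(r)$. Integrating yields $\phi(r)\leq c\,r^{2\alpha}$ with $\alpha=(8\pi C)^{-1}$, and the metric-space Morrey embedding then produces the continuous representative $\bar u\in C^{0,\alpha}(\Dc,X)$.

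For property~(N) I would argue from the local area bound rather than from Hölder continuity alone, since the latter is insufficient. Given $N\subset\Do$ with $\Leb^2(N)=0$, one covers $N$ by a controlled Vitali family of small balls and uses the minimality estimate $\mathcal H^2(\bar u(B(x,r)))\leq\tn{Area}(u|_{B(x,r)})\leq C\,l(u|_{\partial B(x,r)})^2$ together with the decay of $\phi$ to conclude $\mathcal H^2(\bar u(N))=0$. Once property~(N) is in hand, the area formula of Theorem~\ref{t5} becomes available for $\bar u$.

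The main difficulty lies in the chord-arc part. One must first extend the decay up to $\K$, and the $\lambda$-chord-arc condition is precisely what makes the competitor construction work near the boundary: for $x_0\in\K$ the trace on $B(x_0,r)\cap\K$ lies in a sub-arc of $\Gamma$ whose length is comparable, through $\lambda$, to the chordal gap between its endpoints, so the relevant half-circle curve can be filled with controlled area while preserving the monotone boundary parametrisation. This gives a uniform Morrey decay on all of $\Dc$ with exponent depending only on $C$ and $\lambda$. The decisive and most technical step is then to pass from decay, which a priori only yields Hölder control, to genuine higher integrability: one must establish a weak reverse Hölder inequality of the form
\[
\left(\frac{1}{\Leb^2(B)}\int_{B\cap\Do}g^2\,\tr\Leb^2\right)^{1/2}\leq c\cdot\frac{1}{\Leb^2(2B)}\int_{2B\cap\Do}g\,\tr\Leb^2
\]
for all balls $B$, where the exponent $2$ on the left is forced by the quasiconformal area comparison and the $L^1$ right-hand side is produced by combining good Courant--Lebesgue radii with the chord-arc fillings. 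Gehring's lemma then upgrades this to $g\in L^p$ for some $p=p(C,\lambda)>2$, so $\bar u\in W^{1,p}(\Do,X)$, and precomposing with a Möbius transform $m$ that fixes three boundary points removes the conformal noncompactness and makes the resulting bound \eqref{e12} uniform with $M=M(C,\lambda)$. I expect verifying this reverse Hölder inequality up to the boundary — in particular manufacturing the $L^1$ average on the right through the chord-arc fillings — to be the technical heart of the argument.
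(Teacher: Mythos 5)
A preliminary remark: the paper does not prove Theorem~\ref{t8} at all --- it is imported as background from \cite{LW17a} and \cite{LW16} --- so your proposal can only be measured against the proofs in those references. Your skeleton does match them: Courant--Lebesgue radii, isoperimetric fillings glued in as competitors via Lemma~\ref{l1}, area minimality, pointwise quasiconformality, interior Morrey decay, chord-arc fillings at boundary balls, a weak reverse H\"older inequality upgraded by Gehring's lemma, and a three-point M\"obius normalization (needed because $E^p_+$, unlike $E^2_+$, is not conformally invariant). However, two of your steps have genuine gaps.

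First, your property~(N) argument is circular. The inequality $\mathcal{H}^2(\bar u(B(x,r)))\leq \tn{Area}(u|_{B(x,r)})$, which you call a minimality estimate, is not a consequence of minimality: it is essentially the area formula of Theorem~\ref{t5}, which is available only once property~(N) is already known. There exist continuous planar $W^{1,2}$-maps carrying Lebesgue-null sets onto sets of positive measure, and neither H\"older continuity nor the local area bound excludes this behaviour. The route actually taken in \cite{LW17a} is the one you postponed to the chord-arc part: the weak reverse H\"older inequality holds for interior balls with no hypothesis on $\Gamma$ whatsoever (good circles $u|_{\partial B(x,s)}$ are short, hence can always be filled and exchanged by minimality), so Gehring's lemma already gives $\bar u\in W^{1,p}_{\mathrm{loc}}(\Do,X)$ for some $p>2$ unconditionally, and then Theorem~\ref{t4} yields property~(N) on compact subsets of the open disc; since the boundary circle is Lebesgue-null and maps onto the rectifiable, hence $\mathcal{H}^2$-null, curve $\Gamma$, property~(N) holds globally. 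Second, the first assertion of the theorem is continuity on all of $\Dc$ for an \emph{arbitrary} rectifiable Jordan curve, with no chord-arc assumption; your Morrey decay gives interior continuity only, and your boundary analysis is carried out only under the chord-arc hypothesis. Boundary continuity in the general case needs a separate argument, combining Courant--Lebesgue radii centered at boundary points with the fact that the trace is a monotone parametrization of a Jordan curve, so that the subarc of $\Gamma$ between the images of the endpoints of a short cross-cut has small diameter. A smaller inaccuracy: the pointwise bound $g^2\leq 2\,J(\tn{apmd}_x u)$ is not ``the infinitesimal form of \eqref{e10}''; it implies \eqref{e10} by integration but is strictly stronger, and it is itself a nontrivial theorem of \cite{LW16} --- it is precisely the localization your decay argument needs, so it cannot be taken for free.
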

So if the target spaces satisfies some quadratic isoperimetric inequality we may always without loss of generality assume that a solution of Plateau's problem $u$ is in $C^0(\Dc,X)$ and satisfies Lusin's property (N). For disc targets one obtains the following improvement.
\begin{Thm}[\cite{LW17b}]
\label{t9}
Let $Z\in \mathcal{D}(L,A,C,l_0)$. Then there exists a solution $u$ of Plateau's problem for $\partial Z$ in $Z$. Every such $u$ is monotone and furthermore satisfies~$\tn{Area}(u)=\mathcal{H}^2(Z)$.
\end{Thm}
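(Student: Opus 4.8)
The plan is to secure existence from Theorem~\ref{t7} and then analyse the resulting map topologically. A geodesic metric disc is homeomorphic to $\Dc$ and therefore compact, hence proper, so Theorem~\ref{t7} applies once we know $\Lambda(\partial Z,Z)\neq\emptyset$. Since $\partial Z$ is a rectifiable Jordan curve of length at most $L$, a finite energy Sobolev filling of $\partial Z$ is provided by the $(C,l_0)$-quadratic isoperimetric inequality (subdividing $Z$ along geodesics into Jordan subdomains of boundary length below $l_0$ and gluing the fillings by Lemma~\ref{l1} if $l(\partial Z)\geq l_0$). Thus a solution $u$ exists. By Theorem~\ref{t8} we may replace $u$ by a representative $\bar u\in C^0(\Dc,Z)$ satisfying Lusin's property~(N), and then $\tn{tr}(u)=\bar u_{|\K}$ is by definition a monotone parametrisation of $\partial Z$, that is, a degree $\pm 1$ map of circles.

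Next I would establish surjectivity and the lower area bound, which are elementary. If $\bar u$ missed an interior point $p\in Z$, then $\bar u$ would take values in $Z\setminus\{p\}$, which deformation retracts onto $\partial Z$; this would make $\bar u_{|\K}$ nullhomotopic in $Z\setminus\{p\}$ and contradict that it has degree $\pm 1$. Hence $\bar u$ is onto $Z$, so $\tn{card}(\bar u\inv(x))\geq 1$ for every $x$, and the area formula (Theorem~\ref{t5}) together with property~(N) yields
\[
\tn{Area}(u)=\int_Z \tn{card}(\bar u\inv(x))\ \tr\mathcal{H}^2(x)\geq \mathcal{H}^2(Z).
\]

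The substance of the theorem is the reverse inequality together with monotonicity, and these I would treat through a monotone--light factorisation. Collapsing each connected component of each fibre $\bar u\inv(x)$ produces an upper semicontinuous decomposition of $\Dc$ into continua; because $Z$ is a disc and $\bar u$ has boundary degree one these continua are non-separating, so Moore's theorem identifies the quotient with $\Dc$ and furnishes a factorisation $\bar u=g\circ\pi$ with $\pi\colon\Dc\to\Dc$ monotone and $g\colon\Dc\to Z$ light, both of boundary degree one. Monotonicity of $\bar u$ is then equivalent to $g$ being a homeomorphism. Granting this, $g$ is an injective surjection with property~(N), so $\tn{card}(\bar u\inv(x))=1$ for $\mathcal{H}^2$-almost every $x$, and the area formula upgrades the previous inequality to the identity $\tn{Area}(u)=\mathcal{H}^2(Z)$.

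The main obstacle is therefore to rule out that the light factor $g$ folds, i.e.\ to show $g$ is injective. Here minimality must enter: if $g$ identified points across a region, one would excise that region and reglue through the geodesic structure of $Z$, using the gluing lemma (Lemma~\ref{l1}) to produce a competitor in $\Lambda(\partial Z,Z)$ and the $(C,l_0)$-quadratic isoperimetric inequality to bound the area of the inserted patch, arranging a strict decrease of area and contradicting that $u$ solves Plateau's problem. Making this quantitative---controlling the patch area by the isoperimetric constant while guaranteeing a genuine net saving, and thereby excluding interior branch points of the minimiser---is the delicate step, and is precisely the metric analogue of the classical absence-of-branch-points analysis for area minimising discs.
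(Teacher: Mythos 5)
Your proposal establishes existence (the paper does this slightly differently: it converts the $(C,l_0)$-inequality plus $\mathcal{H}^2(Z)\leq A$ into a \emph{global} $C'$-quadratic isoperimetric inequality with $C'=\max\{C,\mathcal{H}^2(Z)/l_0^2\}$, rather than your subdivision-and-glue sketch) and the easy inequality $\tn{Area}(u)\geq \mathcal{H}^2(Z)$ via surjectivity and theorem~\ref{t5}. But the proof stops exactly where the theorem begins: monotonicity and the reverse inequality are deferred to showing the light factor $g$ is injective, and your final paragraph concedes that this ``delicate step'' is not carried out. That step \emph{is} the theorem --- it is the main result of \cite{LW17b} (canonical parametrizations of metric discs), which is why the statement carries that citation. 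A review cannot accept ``this is precisely the metric analogue of the classical absence-of-branch-points analysis'' as a proof of it. There is also an unjustified claim earlier in the same argument: Moore's theorem requires the decomposition elements (fibre components of $\bar u$) to be non-separating, which is exactly the ``no bubbles'' property; in this paper that property is \emph{deduced from} monotonicity (see the proof of theorem~\ref{t11}), so assuming it here is circular.

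More importantly, the strategy you outline cannot be completed as stated, because it uses only area minimality, and area minimality alone does not imply monotonicity. Given a monotone parametrization of $Z$, one can graft onto it a ``hair'' or a bubble whose image is a curve, hence has zero $\mathcal{H}^2$-measure: by the area formula of theorem~\ref{t5} this competitor has the same area, yet it is not monotone and its fibres can separate $\Dc$. So no cut-and-paste argument that produces ``a strict decrease of area'' can rule such maps out. What kills them is the second half of the definition of a solution of Plateau's problem --- minimality of $E^2_+$ among area minimizers --- which your proposal never invokes (degenerate hairs cost energy but no area). This is precisely how the paper proceeds: it shows, using theorem~\ref{t5} and the energy-minimizing competitor $v$ furnished by \cite{LW17c}, that $u$ solves Plateau's problem if and only if $E^2_+(u)=\inf_{w\in\Lambda(\partial Z,Z)}E^2_+(w)$, and then quotes \cite[theorem~1.2]{LW17b}, which states that such energy minimizers with disc target are monotone and hence, by the area formula, have $\tn{Area}(u)=\mathcal{H}^2(Z)$. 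In short, the paper's proof is a reduction to the cited result, whereas yours attempts to reprove that result and leaves its core open.
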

This formulation of theorem~\ref{t9} is contained only implicitly in~\cite{LW17b}. So for the convenience of the reader we reproduce the proof.
\begin{proof}
Every Jordan domain $U\subset Z$ satisfies
\begin{equation}
\label{e13}
\mathcal{H}^2(U)\leq C'\cdot l(\partial U)^2
\end{equation} 
where $C'=\max\left\{C,\frac{\mathcal{H}^2(Z)}{l_0^2}\right\}$. So $Z$ satisfies a $C'$-quadratic isoperimetric inequality. As $l(\partial Z)<\infty$ the boundary curve admits a Lipschitz parametrization and hence by the global quadratic isoperimetric inequality one has $\Lambda(\partial Z,Z)\neq \emptyset$. So by theorem~\ref{t7} there is a solution of Plateau's problem for~$\partial Z$ in~$Z$.\par 
We prove that $u$ is a solution of Plateau's problem for $\partial Z$ in $Z$ iff 
\begin{equation}
\label{e14}
E^2_+(u)=\inf_{w\in\Lambda(\partial Z,Z)}E^2_+(w).
\end{equation}
Let $u$ be a solution of Plateau's problem for~$\partial Z$ in~$Z$ and $v \in \Lambda(\partial Z,Z)$ be such that $E^2_+(v)=\inf_{w \in \Lambda(\partial Z,Z)} E^2_+(w)$. By \cite[theorem 4.4]{LW17c} such $v$ exists and may be chosen in $C^0(\Dc,X)$ and satisfying Lusin's property (N). So by theorem~\ref{t5}
\begin{equation}
\label{e15}
\int_Z \tn{card}(v\inv (z))\ \tr \mathcal{H}^2(z)=\tn{Area}(v)\leq E^2_+(v)<\infty.
\end{equation}
So $\tn{card}(v\inv (z))$ is finite~$\mathcal{H}^2$ almost everywhere in~$Z$. But by \cite[theorem~1.2]{LW17b} $v$ is monotone and hence $\tn{card}(v\inv(z))=1$ for $\mathcal{H}^2$-almost every $z\in Z$. Using that $u$ satisfies Lusin's property (N) and applying theorem~\ref{t5} again we obtain
\begin{equation}
\label{e16}
\tn{Area}(u)\leq \tn{Area}(v)=\mathcal{H}^2(Z)\leq \tn{Area}(u).
\end{equation}
So $v$ is a solution of Plateau's problem. As $u$ minimizes $E^2_+$ among all area minimizers vice versa one has that 
\begin{equation}
\label{e17}
E^2_+(u)\leq E^2_+(v)=\inf_{w \in \Lambda(\partial Z,Z)} E^2_+(w)\leq E^2_+(u).
\end{equation} 
As minimizers of $E^2_+$ are monotone and have area equal to $\mathcal{H}^2(Z)$ this completes the proof.
\end{proof}
\subsection{Intrinsic minimal discs}
\label{2.3}
Let $X$ be a complete metric space and $u \in C^0(\Dc,X)$. Define a semimetric $d_u:\Dc\times \Dc \rightarrow [0,\infty]$ on $\Dc$ by setting
\begin{equation}
\label{e18}
d_u(x,y):=\inf\{ l(u \circ \eta)\ | \ \eta:[0,1]\rightarrow \Dc \tn{ continuous, }\eta(0)=x,\eta(1)=y\}.
\end{equation}
We call the metric space $Z_u$ corresponding to $(\Dc,d_u)$ the \textit{intrinsic disc} associated to $u$. By definition one gets canonical maps $P_u:\Dc\rightarrow Z_u$ and $\hat{u}:Z_u \rightarrow X$ such that $\hat{u}\circ P_u=u$. For solutions of Plateau's problem $u$ this construction has been studied in detail in \cite{LW18a}.
\begin{Thm}[\cite{LW18a}]
\label{t10}
Let $X$ be a complete metric space satisfying a $(C,l_0)$-quadratic isoperimetric inequality, $\Gamma$ a rectifiable Jordan curve in $X$ and $u$ a solution of Plateau's problem for $\Gamma$ in $X$. Then we have the following list of properties.
\begin{enumerate}
\item \label{en1}
$Z_u$ is a geodesic metric disc satisfying $\mathcal{H}^2(U)\leq C \cdot l(\partial U)^2$ for every Jordan domain $U \subset Z_u$ such that $l(\partial U)<l_0$.
\item \label{en2}
$P_u\in \Lambda(\partial Z_u,Z_u)$ is a solution of Plateau's problem for $\partial Z_u$ in $Z_u$.
\item \label{en3}
For every curve $\gamma$ in $\Dc$ one has $l(u \circ \gamma)=l(P_u\circ \gamma)$ and for every $V \subset \Dc$ open $\tn{Area}(u_{|V})=\mathcal{H}^2(P_u(V))$. Especially $l(\Gamma)=l(\partial Z_u)$ and $\tn{Area}(u)=\mathcal{H}^2(Z_u)$.
\item \label{en4}
$\hat{u}$ is $1$-Lipschitz.
\item \label{en5}
$Z_u \in \tn{LGC}(\rho)$ where $\rho:\left(0,\frac{l_0}{2}\right)\rightarrow\R$, $\rho(r):=(8C+1)\cdot r$.
\item \label{en6}
Let $l(\Gamma)\leq L$ and $\tn{Area}(u)\leq A$. Then for every $\epsilon>0$ there exists an $\epsilon$-net in $Z_u$ with at most $K$ elements where $K=K(\epsilon,L,A,C,l_0)$.
\end{enumerate}
\end{Thm}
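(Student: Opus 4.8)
The plan is to order the claims so the soft facts feed the harder ones, after replacing $u$ by the representative $\bar u\in C^0(\Dc,X)$ with Lusin's property~(N) provided by Theorem~\ref{t8}; write $P_u,\hat u$ for the induced maps with $\hat u\circ P_u=u$. Parts~(4) and the length assertion of~(3) are purely formal. Since $d_u(x,y)$ is an infimum of lengths $l(u\circ\eta)$ and each such length dominates $d_X(u(x),u(y))$, one has $d_X(u(x),u(y))\le d_u(x,y)$; passing to the quotient shows $\hat u$ is $1$-Lipschitz, which is~(4). The same bound gives $l(u\circ\gamma)\le l(P_u\circ\gamma)$ for every curve $\gamma$ in $\Dc$, and inserting $\eta=\gamma|_{[s,t]}$ into the definition of $d_u$ and summing over a partition gives the reverse inequality, so $l(u\circ\gamma)=l(P_u\circ\gamma)$; applied to a parametrization of $\K$ this yields $l(\Gamma)=l(\partial Z_u)$.

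The step I expect to be the main obstacle is showing that $Z_u$ is a metric disc. First one checks that $d_u$ is finite-valued, so that $P_u$ is an honest continuous surjection onto a compact space: the finite energy $E^2_+(u)<\infty$ together with absolute continuity of $u$ on almost every segment produces rectifiable connecting curves, which a covering argument promotes to finiteness of $d_u$ everywhere. The decisive point is then that the fibers $P_u\inv(z)$ are compact, connected and non-separating, i.e.\ that $P_u$ is monotone and cell-like, whence Moore's decomposition theorem identifies $\Dc/{\sim}$ with $\Dc$. I would extract this from the minimality of $u$: a disconnected or separating fiber permits a length- and area-nonincreasing modification strictly lowering energy or area, contradicting that $u$ solves Plateau's problem, and the gluing Lemma~\ref{l1} is precisely the tool turning such a homotopy into an admissible competitor.

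For the area assertion of~(3) the key claim is the infinitesimal isometry $\tn{apmd}_x u=\tn{apmd}_x P_u$ for almost every $x$, reading $P_u$ as a map into $Z_u$ (which one checks is again continuous, Sobolev and satisfies property~(N)). As $\hat u$ is $1$-Lipschitz and $u=\hat u\circ P_u$, the inequality $\tn{apmd}_x u\le\tn{apmd}_x P_u$ is automatic; the reverse follows from the length preservation above, which forces the two metric speeds to agree along almost every direction. Hence $J(\tn{apmd}_x u)=J(\tn{apmd}_x P_u)$ and $\tn{Area}(u_{|V})=\tn{Area}((P_u)_{|V})$ for open $V$, and combining this with the fact that almost every fiber of $P_u$ is a singleton (again from minimality, as positive-area collapsing is impossible) and the area formula Theorem~\ref{t5} applied to $P_u$ gives $\tn{Area}((P_u)_{|V})=\mathcal{H}^2(P_u(V))$, in particular $\tn{Area}(u)=\mathcal{H}^2(Z_u)$. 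Part~(2) then follows, since $P_u$ has the same area as $u$ while $\hat u$, being $1$-Lipschitz, maps any competitor for $\partial Z_u$ in $Z_u$ to one for $\Gamma$ in $X$ of no larger area, so minimality of $u$ transfers downward to $P_u$.

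It remains to treat the quantitative items. For~(1), a Jordan domain $U\subset Z_u$ with $l(\partial U)<l_0$ has rectifiable boundary; filling $\hat u(\partial U)$ in $X$ by the $(C,l_0)$-isoperimetric inequality, pulling the filling back and using area preservation bounds $\mathcal{H}^2(U)$ by $C\cdot l(\partial U)^2$. For~(5), a loop of length at most $2r$ with $r<l_0/2$ bounds a disc of area at most $C(2r)^2$ whose diameter is controlled by the standard area-to-diameter estimate, and coning $B(z,r)$ off along such fillings keeps the contraction inside $B(z,(8C+1)r)$, the constant being exactly what that estimate delivers. For~(6), I would use the complementary lower bound $\mathcal{H}^2(B(z,r))\ge c(C)\,r^2$ for balls up to the $l_0$-scale: a maximal $\epsilon$-separated set gives disjoint balls of radius $\epsilon/2$ whose number is capped by the area bound $A$, while $L$ and $l_0$ control the balls near $\partial Z_u$, giving a bound $K=K(\epsilon,L,A,C,l_0)$.
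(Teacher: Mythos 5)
First, a point of comparison you could not have known: the paper does not prove this statement at all. Theorem~\ref{t10} is quoted from \cite{LW18a} and used as a black box, so there is no internal proof to measure your attempt against; what you have written is a reconstruction of the Lytchak--Wenger arguments. Your high-level architecture does match theirs --- a Moore-type decomposition theorem applied to the cell-like fibers of $P_u$, the area formula of Theorem~\ref{t5}, filling arguments for the isoperimetric inequality, and a packing bound for the nets --- but the sketch asserts precisely those steps that constitute the actual content of \cite{LW18a}, and two of them are wrong as stated.

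Concretely: (i) Finiteness and continuity of $d_u$ do \emph{not} follow from finite energy plus absolute continuity on almost every segment. In polar coordinates centered at a fixed point $x$, finite energy controls $\int |u_r|^2\, r\,\tr r$ on almost every ray, but the weight $r$ is exactly what prevents any conclusion about $\int |u_r|\,\tr r$; the rays through the \emph{specific} point $x$ may all have infinite-length image, and a covering argument cannot anchor a rectifiable curve at $x$ itself. In \cite{LW18a} the finiteness and continuity of $d_u$ (equivalently, continuity of $P_u$) rest on the regularity theory of solutions of Plateau's problem --- Courant--Lebesgue circles combined with the quadratic isoperimetric inequality and minimality --- not on membership in $W^{1,2}(\Do,X)$. (ii) The monotonicity and non-separation of the fibers $P_u\inv(z)$, which you need before Moore's theorem can be invoked, is justified only by the phrase ``a length- and area-nonincreasing modification strictly lowering energy or area''; no such modification is constructed, and producing an admissible competitor in $\Lambda(\Gamma,X)$ from a disconnected or separating fiber is the decisive technical step, not a routine application of Lemma~\ref{l1}. (iii) In part~(\ref{en1}) you propose ``pulling the filling back'': there is no map from $X$ to $Z_u$, so fillings cannot be pulled back. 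The correct argument goes the other way --- glue the filling of $\hat{u}(\partial U)$ supplied by the isoperimetric inequality of $X$ into $u$ via Lemma~\ref{l1} to obtain a competitor, and then invoke \emph{minimality} of $u$ to bound $\tn{Area}(u_{|P_u\inv(U)})=\mathcal{H}^2(U)$. Your sketch of~(\ref{en1}) never uses minimality, and without it the statement is false: a non-minimal disc can have arbitrarily large area over a short boundary curve. These are not expository omissions but the mathematical substance of the cited theorem; the parts of your proposal that are genuinely formal (items~(\ref{en4}), the length half of~(\ref{en3}), and the packing argument for~(\ref{en6}) granted a density lower bound) are correct.
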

We prove the following more precise version of theorem~\ref{t2}.
\begin{Thm}
\label{t11}
Let $Z\in \mathcal{D}(L,A,C,l_0)$. If $u$ is a solution of Plateau's problem for $\partial Z$ in $Z$, then $\hat{u}:Z_u\rightarrow Z$ is an isometry.
\end{Thm}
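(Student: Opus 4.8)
The map $\hat{u}$ is $1$-Lipschitz by part~\eqref{en4} of Theorem~\ref{t10}, and it is surjective since by Theorem~\ref{t9} the solution $u$ is monotone; moreover all three maps $u$, $P_u$, $\hat u$ are monotone, with connected fibres. The $1$-Lipschitz property gives $d_Z(\hat u(a),\hat u(b))\le d_u(a,b)$ for all $a,b\in Z_u$. Conversely, writing $x,y\in\Dc$ for representatives, the very definition of $d_u$ together with $Z$ being geodesic yields $d_u(x,y)\ge d_Z(u(x),u(y))$ for free, so $\hat u$ can only decrease distances. Hence the entire statement reduces to the single reverse inequality $d_u(x,y)\le d_Z(u(x),u(y))$ for all $x,y\in\Dc$, which is a \emph{lifting} assertion for the monotone surjection $u\colon\Dc\to Z$: one must connect $x$ to $y$ by a curve whose $u$-image is (nearly) a geodesic of $Z$.

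First I would exploit genericity via the area formula. By Theorem~\ref{t8} the solution $u$ has a continuous representative satisfying Lusin's property~(N), so Theorem~\ref{t5} applies and gives $\tn{Area}(u)=\int_Z\tn{card}(u\inv(z))\,\tr\mathcal{H}^2(z)<\infty$. Thus $\tn{card}(u\inv(z))<\infty$ for $\mathcal{H}^2$-almost every $z$, and since $u$ is monotone its fibres are connected; a finite connected set is a single point, so $\tn{card}(u\inv(z))=1$ for $\mathcal{H}^2$-almost every $z\in Z$. Consequently the bad set $B\subset Z$ of points whose fibre is a nondegenerate continuum is $\mathcal{H}^2$-null, and over $Z\setminus B$ the map $u$ has a well-defined single-valued inverse. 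This also shows $\hat u$ is injective on a full-measure subset, and together with $\tn{Area}(u)=\mathcal{H}^2(Z)=\mathcal{H}^2(Z_u)$ (Theorem~\ref{t9} and part~\eqref{en3} of Theorem~\ref{t10}) it identifies full-measure subsets of $Z_u$ and $Z$.

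Next I would prove the reverse inequality on a dense set of pairs and then extend by continuity. For $z_1,z_2\in Z\setminus B$ pick a geodesic $\sigma$ from $z_1$ to $z_2$; arranging by a Fubini-type choice over the family of geodesics that $\sigma$ meets the null set $B$ in a negligible set, I would lift $\sigma$ through $u$ to a path $\eta$ in $\Dc$ from $x$ to $y$, using the single-valued inverse of $u$ over $Z\setminus B$ and extending continuously across the exceptional fibres by monotonicity. Then $u\circ\eta=\sigma$, so by the definition of $d_u$ one gets $d_u(x,y)\le l(u\circ\eta)=l(\sigma)=d_Z(z_1,z_2)$. Combined with the trivial reverse bound this gives $d_u(x,y)=d_Z(u(x),u(y))$ for all generic pairs; since $\hat u$ is $1$-Lipschitz the two sides depend continuously on the pair, so the equality propagates to all $x,y\in\Dc$. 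Therefore $\hat u$ is distance-preserving, and being surjective it is an isometry $Z_u\to Z$.

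The main obstacle is the lifting in the previous step: turning the $\mathcal{H}^2$-a.e.\ defined inverse into a genuinely continuous lift of the geodesic $\sigma$, i.e.\ controlling how $\sigma$ crosses the null bad set $B$ and the nondegenerate fibres sitting over it. I expect to handle this through the theory of monotone maps of $2$-cells in the spirit of \cite{Why35}: the preimage $u\inv(\sigma)$ is a subcontinuum of $\Dc$ on which $u$ restricts to a monotone map onto the arc $\sigma$, and such a map admits a cross-section tracing $\sigma$ monotonically. A cleaner alternative, which I would pursue in parallel, is to first establish that $\hat u$ is injective—so that, being a continuous monotone bijection of compact discs, it is a homeomorphism—and then deduce length-preservation of $\hat u$ directly: every curve in $Z_u$ lifts through $P_u$ to $\Dc$ with $u$-length equal to its length by part~\eqref{en3} of Theorem~\ref{t10}, whence $l(\hat u\circ\gamma)=l(\gamma)$, and a length-preserving bijection between length spaces is an isometry. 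Either way, proving injectivity of $\hat u$ (equivalently, that no arc of $Z_u$ is collapsed) is the true crux, and I anticipate it will require combining the measure-theoretic genericity above with the disc topology of $Z$ and its quadratic isoperimetric inequality, in the manner of \cite{LW18a}.
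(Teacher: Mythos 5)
Your reductions are sound up to the crux, and you correctly isolate where the difficulty lies: $\hat u$ is $1$-Lipschitz and surjective, the fibres of $u$ are singletons $\mathcal{H}^2$-almost everywhere (this genericity argument is essentially the one the paper itself uses inside the proof of theorem~\ref{t9}), and everything hinges on the reverse inequality $d_u(x,y)\le d_Z(u(x),u(y))$. But the tool you propose for that step fails. A monotone surjection of a continuum onto an arc need \emph{not} admit a continuous cross-section: the projection of the topologist's sine curve $\{(t,\sin(1/t))\,:\,0<t\le 1\}\cup(\{0\}\times[-1,1])$ onto $[0,1]$ is monotone, yet admits no continuous section, and its domain is not even path-connected. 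So ``lift the geodesic $\sigma$ through $u$'' cannot be justified by general monotone-map theory; you would need to know, say, that $u\inv(\tn{im}(\sigma))$ is locally connected, and nothing you have established gives that. Moreover, even an arbitrary path $\eta$ in $u\inv(\tn{im}(\sigma))$ joining $x$ to $y$ (if one existed) would only satisfy $l(u\circ\eta)\ge l(\sigma)$, since it may backtrack along the arc, so it does not bound $d_u(x,y)$ by $l(\sigma)$. Your fallback route suffers from the same problem twice over: injectivity of $\hat u$ is left open; the claim that every curve in $Z_u$ lifts through $P_u$ is again an unproved section statement; and in any case a $1$-Lipschitz bijection between geodesic spaces need not be an isometry (rescale the metric of one factor), so injectivity plus surjectivity would not finish the proof.

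It is instructive to see how the paper circumvents the lifting problem entirely. It uses the auxiliary semimetric $|d_u|(x,y)=\inf\{\tn{diam}(u(C))\,:\,C\subset \Dc \tn{ connected},\ x,y\in C\}$ from \cite{PS18}: here one only needs a \emph{connected} set, not a path, so taking $C=u\inv(\tn{im}(\gamma))$ for a geodesic $\gamma$ in $Z$ (this preimage is connected because $u$ is monotone and $\Dc$ is compact) immediately gives $|d_u|(x,y)\le d_Z(u(x),u(y))$, i.e.\ $|Z_u|$ is isometric to $Z$; geodesicity of $Z$ then shows that passing to the induced path metric changes nothing. The genuinely hard comparison between $d_u$ and this diameter-based metric is delegated to \cite[proposition~9.3]{PS18}, whose hypotheses are continuity of $P_u$ (theorem~\ref{t10}.\ref{en2}, a nontrivial input from \cite{LW18a}) and the ``no bubbles'' property, which the paper verifies by a short contradiction argument using monotonicity. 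To complete your direct approach you would essentially have to reprove that proposition; as it stands, the proposal has a genuine gap at exactly the step you yourself call the main obstacle.
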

For $u \in C^0(\Dc,X)$ two variants of the intrinsic disc metric have been investigated in \cite{PS18}. Define a semimetric $|d_u|:\Dc\times \Dc \rightarrow \R$ by setting
\begin{equation}
\label{e19}
|d_u|(x,y):=\inf\{\tn{diam}(u(C))\ | \ C\subset \Dc \tn{ connected, } x,y\in C\}.
\end{equation}
Let $|Z_u|$ be the metric space corresponding to $(\Dc,|d_u|)$. Let $\langle|Z_u|\rangle$ be the metric space obtained by taking the path metric on $|Z_u|$. As $Z_u$ is a length space we get the following canonical factorization of $\hat{u}$ by $1$-Lipschitz maps 
\begin{equation}
\label{e20}
Z_u\overset{u_1}{\rightarrow}\langle|Z_u|\rangle \overset{u_2}{\rightarrow}|Z_u| \overset{u_3}{\rightarrow}X.
\end{equation}
\begin{proof}
The aim is to show that in our situation all the maps $u_i$ in \eqref{e20} are isometries and hence so is $\hat{u}$.\par 
\textbf{$u_3$ is an isometry:} Let $x,y \in \Dc$. Let $\gamma:[0,1]\rightarrow X$ be a geodesic in $Z$ connecting $u(x)$ and $u(y)$. As $\Dc$ is compact and $u$ is monotone the set $u\inv(\tn{im}(\gamma))$ is connected. So
\[
|d_u|(x,y)\leq \tn{diam}(u(u\inv(\tn{im}(\gamma))))=\tn{diam}(\tn{im}(\gamma))=d(u(x),u(y)).
\]
As $u_3$ is $1$-Lipschitz this proves that $u_3$ is an isometry.\par 
\textbf{$u_2$ is an isometry:} As $|Z_u|$ is isometric to $Z$ also $|Z_u|$ is geodesic. So $u_2$ is an isometry.\par
\textbf{$u_1$ is an isometry:} We say that $u$ has \textit{no bubbles} if for every $z \in Z$ every connected component of $\Dc \setminus u\inv(z)$ contains a point in $\K$. In \cite[proposition~9.3]{PS18} it has been proved that if $P_u:\Dc\rightarrow Z_u$ is continuous and $u$ has no bubbles, then $u_1:Z_u\rightarrow \langle | Z_u|\rangle$ is an isometry. By theorem~\ref{t10}.\ref{en2} $P_u$ is continuous. So it suffices to proof that $u$ has no bubbles. A proof of this fact is implicitly contained in \cite{LW18a}. In our situation one can also conclude this by the following simple argument.\par 
Assume $U$ was a connected component of $\Dc \setminus u \inv(z)$ disjoint to $\K$. Then $U$ is open. Define $\tilde{u}:\Dc\rightarrow Z$ by setting $\tilde{u}(q):=z$ for $q \in U$ and $\tilde{u}(q):=u(q)$ otherwise. Then $\tilde{u}$ is continuous and restricts to a monotone parametrization of $\partial Z$ on $\K$. So by topological reasons $\tilde{u}$ is surjective. Let $q \in U$ and $w:=u(q)$. Then there is $p \in \Dc \setminus \overline{U}$ such that $u(p)=\tilde{u}(p)=w$. As $\partial U \cap u\inv(w)=\emptyset$ this proves that $u\inv(w)$ is disconnected. A contradiction to $u$ being monotone.
\end{proof}
\begin{Cor}
\label{c1}
For $L,A,C \in (0,\infty)$ and $l_0\in (0,\infty]$ let $\mathcal{D}(L,A,C,l_0)$ be the class of all geodesic metric discs $Z$ satisfying $l(\partial Z)\leq L$, $\mathcal{H}^2(Z)\leq A$ and a $(C,l_0)$-quadratic isoperimetric inequality. Then $\mathcal{D}(L,A,C,l_0)$ is precompact in Gromov-Hausdorff space.
\end{Cor}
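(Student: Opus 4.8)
The plan is to verify Gromov's precompactness criterion: a family of compact metric spaces is precompact in Gromov-Hausdorff space precisely when it is uniformly totally bounded, i.e. when the diameters are uniformly bounded and, for every $\epsilon>0$, there is a uniform bound $N(\epsilon)$ on the number of elements needed for an $\epsilon$-net. All the analytic substance required for this has already been assembled in theorems~\ref{t9}, \ref{t10} and~\ref{t11}, so the proof amounts to combining them correctly.

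First I would fix $Z\in \mathcal{D}(L,A,C,l_0)$ and invoke theorem~\ref{t9}: since $Z$ is a geodesic metric disc satisfying a $(C,l_0)$-quadratic isoperimetric inequality with $l(\partial Z)\leq L$ and $\mathcal{H}^2(Z)\leq A$, there exists a solution $u$ of Plateau's problem for $\partial Z$ in $Z$, and any such $u$ satisfies $\tn{Area}(u)=\mathcal{H}^2(Z)\leq A$. Applying theorem~\ref{t10} with $X=Z$ and $\Gamma=\partial Z$ (rectifiable, since $l(\partial Z)\leq L<\infty$), part~\ref{en6} produces, for every $\epsilon>0$, an $\epsilon$-net in the intrinsic disc $Z_u$ of cardinality at most $K(\epsilon,L,A,C,l_0)$, a bound depending only on the parameters defining the class. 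By theorem~\ref{t11} the map $\hat u:Z_u\to Z$ is an isometry, so the very same bound transfers to $Z$ itself. This yields the uniform covering bound $N(\epsilon):=K(\epsilon,L,A,C,l_0)$.

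It then remains to bound the diameter uniformly and to record compactness. Each $Z$ is compact: realised through theorem~\ref{t11} as the intrinsic disc $Z_u$, it is the image of the compact set $\Dc$ under the continuous surjection $P_u$, continuity of $P_u$ being part of theorem~\ref{t10}.\ref{en2}. For the diameter I would use that $Z$ is connected, being a disc. Fixing $\epsilon=1$ and an $\epsilon$-net $\{x_1,\dots,x_m\}$ with $m\leq K(1,L,A,C,l_0)$, connectedness forces the graph on the $x_i$ having edges between net points at distance $<2\epsilon$ to be connected, since otherwise the two corresponding unions of $\epsilon$-balls would disconnect $Z$. Hence any two net points are joined by a chain of at most $m-1$ steps each of length $<2\epsilon$, and every point of $Z$ lies within $\epsilon$ of the net, so $\tn{diam}(Z)\leq 2\epsilon m\leq 2K(1,L,A,C,l_0)$, uniformly over the class.

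With a uniform diameter bound and uniform $\epsilon$-net bounds in hand, Gromov's criterion gives precompactness of $\mathcal{D}(L,A,C,l_0)$ in Gromov-Hausdorff space, completing the proof. The point deserving attention is that the bound $K(\epsilon,L,A,C,l_0)$ in theorem~\ref{t10}.\ref{en6} is genuinely uniform over the class; it is here that the quantitative role of the isoperimetric constant together with the area and boundary-length bounds enters, and this is the true engine of the argument. The diameter control, the only estimate not handed to us directly, follows cheaply from connectedness of the disc as above; alternatively it could be extracted from the standard lower area estimate $\mathcal{H}^2(B(p,r))\geq (4C)\inv r^2$ for interior balls combined with $\mathcal{H}^2(Z)\leq A$, but the net argument is more economical.
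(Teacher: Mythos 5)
Your proof is correct and follows essentially the same route as the paper: uniform $\epsilon$-nets from theorem~\ref{t10}.\ref{en6} transferred to $Z$ via the isometry of theorem~\ref{t11}, then Gromov's compactness criterion. The only difference is cosmetic: where you prove the uniform diameter bound by hand with the chain-of-balls connectedness argument, the paper simply cites that this bound is automatic for geodesic spaces (\cite[exercise 7.4.14]{BBI01}), which is exactly the fact your argument establishes.
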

\begin{proof}
Let $\epsilon >0$. By theorem~\ref{t11} and theorem~\ref{t10}.\ref{en6} there is a universal constant~$K$ such that every $Z \in\mathcal{D}(L,A,C,l_0)$ admits an $\epsilon$-net with at most $K$ elements. So by Gromov's compactness criterion, see \cite{Gro81a}, $\mathcal{D}(L,A,C,l_0)$ is precompact in Gromov-Hausdorff space. Note here that the uniform diameter bound needed in the criterion is automatic for classes consisting of geodesic spaces, see for example~\cite[exercise 7.4.14]{BBI01}.
\end{proof}
\section{Mapping cylinders}
\label{3}
Let $X$ be a compact topological space, $\gamma:\K \rightarrow X$ a closed continuous curve and $I=[0,1]$. The \textit{mapping cylinder} $X_\gamma$ of $\gamma$ is the topological space obtained by glueing $Y:=\K \times I$ to $X$ along the equivalence relation generated by $(t,0)\sim\gamma(t)$.\par
Now assume $X$ is a metric space, $\gamma:\K \rightarrow X$ is Lipschitz and $L,R\in (0,\infty)$. Let $d_Y$ be the product metric on $Y$ obtained by scaling the standard metric on $I$ by factor~$R$ and the angular distance on $Y$ by factor~$\frac{L}{2\pi}$. Let $d$ be the maximal semimetric on $X_\gamma$ subject to $d(x,y)\leq d_X(x,y)$ for $x,y \in X$ and $d(x,y)\leq d_Y(x,y)$ for $x,y \in Y$.
\begin{Lem}
\label{l2}
If  $L \geq 2\pi \cdot \tn{Lip}(\gamma)$, then $d$ is a metric and metrizes the topology of~$X_\gamma$. We denote $X_{\gamma,L,R}:=(X_\gamma,d)$.
\end{Lem}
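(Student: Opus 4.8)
The plan is to realise $d$ concretely as a chain semimetric, to bound it below by pulling back a retraction onto $X$ and a height function, to deduce positive-definiteness from a short case distinction, and finally to obtain the topological statement for free from compactness. Throughout I write $\theta(s,t)\in[0,\pi]$ for the angular distance on $\K$, so that the product metric obeys $d_Y((s,a),(t,b))\ge\frac{L}{2\pi}\theta(s,t)$ and $d_Y((s,a),(t,b))\ge R\,|a-b|$. First I would record the standard description of the maximal semimetric bounded by $d_X$ on $X$ and $d_Y$ on $Y$:
\[
d(p,q)=\inf\Big\{\sum_{i=1}^{n}\delta_i(z_{i-1},z_i)\Big\},
\]
the infimum ranging over finite chains $p=z_0,\dots,z_n=q$ whose consecutive members lie in a common piece, with $\delta_i\in\{d_X,d_Y\}$ chosen accordingly and a boundary point $\gamma(t)=(t,0)$ counting in both pieces. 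This expression is a semimetric with $d\le d_X,d_Y$ on the pieces, and applying the triangle inequality along a chain shows any competing semimetric obeying the two bounds is dominated by it; hence it is the maximal one. Since $X$ is compact, $d$ is finite.

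Next I would introduce the retraction $\pi\colon X_\gamma\to X$ given by $\pi|_X=\mathrm{id}$ and $\pi(s,a)=\gamma(s)$, together with the height $h\colon X_\gamma\to[0,1]$ given by $h|_X=0$ and $h(s,a)=a$; both are well defined across the gluing. The pullbacks $(p,q)\mapsto d_X(\pi(p),\pi(q))$ and $(p,q)\mapsto R\,|h(p)-h(q)|$ are semimetrics on $X_\gamma$, and the crux is that each respects the two defining bounds. For the height this is immediate. For $\pi$ it is the \emph{only} place the hypothesis is used: on $X$ one has equality, while on $Y$ the assumption $L\ge 2\pi\,\mathrm{Lip}(\gamma)$ yields $d_X(\gamma(s),\gamma(t))\le\mathrm{Lip}(\gamma)\,\theta(s,t)\le\frac{L}{2\pi}\theta(s,t)\le d_Y((s,a),(t,b))$. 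By maximality of $d$, both pullbacks are $\le d$, so $d(p,q)\ge d_X(\pi(p),\pi(q))$ and $d(p,q)\ge R\,|h(p)-h(q)|$ for all $p,q$; in particular $d$ restricts to $d_X$ on $X$.

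For positive-definiteness let $p\ne q$. If $h(p)\ne h(q)$ the height bound gives $d(p,q)>0$. If $h(p)=h(q)=0$ then $p,q\in X$ and the $\pi$-bound gives $d(p,q)=d_X(p,q)>0$. Otherwise $p=(s,a)$ and $q=(t,a)$ with $a>0$ and $s\ne t$; when $\gamma(s)\ne\gamma(t)$ the $\pi$-bound again applies, so the remaining case is $\gamma(s)=\gamma(t)$. Here I would return to the chain description: a chain from $p$ to $q$ either stays in $Y$, of length at least $d_Y(p,q)=\frac{L}{2\pi}\theta(s,t)$, or it uses an $X$-step, and then its initial $Y$-segment runs from $(s,a)$ down to a boundary point and so has length at least $Ra$. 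Every chain therefore has length at least $\min\{\frac{L}{2\pi}\theta(s,t),Ra\}>0$, whence $d(p,q)>0$. This degenerate same-height case, in which neither $\pi$ nor $h$ separates the two points, is the one genuine \emph{obstacle}; everything else is bookkeeping.

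It remains to identify the topology. The canonical map $X\sqcup Y\to(X_\gamma,d)$ is $1$-Lipschitz on each piece, hence continuous, and so factors through a continuous bijection from $X_\gamma$, equipped with its quotient topology, onto the metric space $(X_\gamma,d)$. As $X$, and therefore $X\sqcup Y$ and its quotient $X_\gamma$, is compact while $(X_\gamma,d)$ is Hausdorff, this continuous bijection is a homeomorphism. Thus $d$ metrizes the topology of $X_\gamma$, completing the plan.
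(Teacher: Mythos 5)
Your proof is correct, but it follows a genuinely different route than the paper. The paper writes down an explicit closed formula $d'$ for the glued distance (a case distinction: $d_X(x,y)$ on $X$; $\min_t\{d_X(x,\gamma(t))+d_Y((t,0),y)\}$ for mixed pairs; $\min\{d_Y(x,y),\min_{t,s}\{d_Y(x,(t,0))+d_X(\gamma(t),\gamma(s))+d_Y((s,0),y)\}\}$ on $Y$), verifies directly that $d'$ is a metric, and then identifies $d=d'$ by a two-sided maximality argument; the hypothesis $L\geq 2\pi\,\mathrm{Lip}(\gamma)$ enters there only implicitly, in the (unstated) verification that $d'$ is well defined across the seam $(t,0)\sim\gamma(t)$. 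You instead never compute $d$: you bound it from below by the pullback semimetrics $d_X(\pi(\cdot),\pi(\cdot))$ and $R\,|h(\cdot)-h(\cdot)|$ via maximality, which makes completely explicit the single place the Lipschitz hypothesis is used, and you handle the one degenerate case ($p=(s,a)$, $q=(t,a)$, $\gamma(s)=\gamma(t)$), where neither pullback separates, by a direct chain estimate; the topology then follows from the compact-to-Hausdorff bijection argument (the paper cites the corresponding exercise in Burago--Burago--Ivanov). Your route is more robust and arguably cleaner as a proof of the lemma alone — it avoids checking well-definedness and the triangle inequality of a case-defined formula — and it recovers along the way that $X\hookrightarrow X_{\gamma,L,R}$ is isometric. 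What it does not deliver is the explicit representation of $d$, which is not a luxury in the paper: that formula is invoked repeatedly in the proof of the subsequent theorem (for the deformation retraction, geodesicity, the computation $\mathcal{H}^2(X_{\gamma,L,R})=\mathcal{H}^2(X)+LR$, and the identification of Gromov--Hausdorff limits of mapping cylinders), so if one adopted your proof, those later arguments would need the chain description or the closed formula to be established separately.
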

In lemma~\ref{l2} we measure $\tn{Lip}(\gamma)$ with respect to angular distance on $\K$.
\begin{proof}
For $x,y \in X_\gamma$ set
\begin{equation}
\label{e21}
d'(x,y):=\begin{cases}
d_X(x,y) &;x,y \in X\\
\underset{t\in \K}{\min} \left\{d_X(x,\gamma(t))+d_Y((t,0),y) \right\}&; x \in X, y\in Y\\
\scriptstyle \underset{t,s\in \K}{\min}\left\{d_{Y}(x,y),d_Y(x,(t,0))+d_X(\gamma(t),\gamma(s))+d_Y((s,0),y)\right\} &; x,y \in Y.
\end{cases}
\end{equation}
We claim that $d'$ is a metric. It is clear that $d'$ is well defined, positive definite and symmetric. It remains to check the triangle inequality. Let $x,y,z \in X_\gamma$. We treat the case $x,z\in X$ and $y\in Y$ only. All other ones are similar. There exist $t,s \in \K$ such that
\begin{align}
d'(x,y)+d'(y,z)&=d_X(x,\gamma(t))+d_Y((t,0),y)+d_Y(y,(s,0))+d_X(\gamma(s),z)\\
&\geq d_X(x,\gamma(p))+\cdot d_X(\gamma(t),\gamma(s))+d_X(\gamma(s),z)\\
&\geq d_X(x,z)=d'(x,z).
\end{align}
So $d'$ is a metric and hence $d'\leq d$. But due to the triangle inequality also $d\geq d'$ and hence $d=d'$. As $d$ is a metric and $X\sqcup Y$ is compact the topology generated by $d$ agrees with the quotient topology on $Z$, see for example exercise~$3.1.14$ in~\cite{BBI01}.
\end{proof}
\begin{Thm}
\label{t12}
In the situation of lemma~\ref{l2} we have the following list of geometric properties.
\begin{enumerate}
\item \label{en7}
The inlcusion $X\rightarrow X_{\gamma,L,R}$ is an isometric embedding and
$X$ is an $R$-net in~$X_{\gamma,L,R}$. There is a strong deformation retraction $H$ of $X_{\gamma,L,R}$ onto $X$ by $1$-Lipschitz maps $H_t:X_{\gamma,L,R}\rightarrow X_{\gamma,L,R}$.
\item \label{en8}
If $X$ is geodesic, then $X_{\gamma,L,R}$ is geodesic.
\item \label{en9}
$\mathcal{H}^2(X_{\gamma,L,R})=\mathcal{H}^2(X)+LR$.
\item \label{en10}
If $R\geq L$, then the curve $\gamma_R:\K \rightarrow X_{\gamma,L,R},\gamma_R(p):=(p,1)$ is a $1$-chord-arc curve of length $L$.
\item \label{en11}
If $X$ is $\tn{LGC}(\rho)$ where $\rho:(0,R_0)\rightarrow\R$ is a contractibility function, then $X_{\gamma,L,R}$ is $\tn{LGC}(\bar{\rho})$ where $R_0':=\min\left\{\frac{R_0}{2},\frac{L}{2}\right\}$, $\bar{\rho}:\left(0,R_0'\right)\rightarrow \R, \bar{\rho}(r)=\rho(2r)+r$.
\item \label{en12}
If $X$ satisfies a $(C,l_0)$-quadratic isoperimetric inequality, then $X_{\gamma,L,R}$ satisfies a $(C',l_0)$-quadratic isoperimetric inequality where $C'=C+\max\left\{ \frac{1}{2\pi}, \frac{R}{L}\right\}$.
\end{enumerate}
\end{Thm}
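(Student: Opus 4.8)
The plan is to treat the six assertions separately, all of them resting on the explicit distance formula \eqref{e21} from Lemma~\ref{l2} together with one explicit map: the fiberwise collapse $H_\tau(p,s):=(p,(1-\tau)s)$ on $Y$, extended by the identity on $X$. First I would record that $H$ is a strong deformation retraction onto $X$ and that each $H_\tau$ is $1$-Lipschitz, indeed nonexpanding: collapsing the $I$-coordinate leaves every $d_X$-term in \eqref{e21} untouched (since $H$ fixes $X$, and hence each $\gamma(t)$) and does not increase any $d_Y$-term, because the product distance does not increase when the height coordinate is scaled toward $0$. Reading off the first branch of \eqref{e21} gives that $X\hookrightarrow X_{\gamma,L,R}$ is isometric, and $d((p,s),\gamma(p))\le Rs\le R$ shows $X$ is an $R$-net; this settles assertion \ref{en7}.

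For \ref{en8} I would produce geodesics by hand. Between two points the infimum defining $d$ is, by \eqref{e21}, attained at a minimizing pair $(t,s)\in\K\times\K$ — existence being guaranteed by compactness of $\K$ and continuity — and concatenating a $Y$-geodesic to $(t,0)$, an $X$-geodesic from $\gamma(t)$ to $\gamma(s)$, and a $Y$-geodesic from $(s,0)$ yields a path whose $d$-length is at most $d(x,y)$, hence exactly $d(x,y)$; the mixed and pure cases are easier. For \ref{en9}, $X$ is isometrically embedded and the open collar is locally isometric to the flat cylinder: for nearby interior points any competitor through the base must descend and re-ascend and is therefore longer, so in a neighbourhood $d=d_Y$. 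Thus $\mathcal H^2$ splits as $\mathcal H^2(X)+\mathcal H^2_{d_Y}(Y)=\mathcal H^2(X)+LR$, the base circle being $\mathcal H^2$-null. For \ref{en10} the top circle $\gamma_R$ has intrinsic length $L$, and when $R\ge L$ any path that leaves it to pass through $X$ pays at least $2R\ge 2L$ (the full height, twice), which exceeds the longest admissible arc $L/2$; hence the round circle is isometrically embedded and $1$-chord-arc.

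Assertion \ref{en11} I would split according to whether $B(w,r)$ meets $X$. If it does not, then (using $r<L/2$) the ball sits high in the collar and is isometric to a planar, hence convex, (half-)disc, so it is contractible inside itself. If it does, I push the ball into $X$ by $H$; since $H$ is nonexpanding the whole homotopy stays in controlled balls and lands inside a ball of radius $\le 2r<R_0$ about a point $\gamma(p)\in X$ with $d(w,\gamma(p))\le r$. Contracting there by $\mathrm{LGC}(\rho)$ of $X$ and tracking the shift of centre produces the radius $\rho(2r)+r=\bar\rho(r)$.

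The real work is \ref{en12}, which I expect to be the main obstacle. Given Lipschitz $\eta$ with $\ell:=l(\eta)<l_0$, the natural move is to set $\beta:=H_1\circ\eta$, fill $\beta$ inside $X$ with area $\le C\,l(\beta)^2\le C\ell^2$ using the $(C,l_0)$-inequality, and glue on the retraction homotopy via Lemma~\ref{l1}; one computes its Busemann area to be $\int_{\K}|x'(t)|\,y(t)\,\tr t$ over the collar part $(x(t),y(t))$ of $\eta$. If this part winds around the collar, then $\ell\ge L$ and $\int_{\K}|x'|y\le R\int_{\K}|x'|\le R\ell\le \tfrac{R}{L}\ell^2$, which is exactly the $\tfrac{R}{L}$ term. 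The difficulty is the non-winding case, where the straight-down homotopy is wasteful for excursions sitting high in the collar: for curves that remain in the collar one should instead fill directly in the unrolled flat (hence $\mathrm{CAT}(0)$) cylinder, whereas for excursions that return to $X$ one needs a sharp flat isoperimetric estimate of the form $\int_{\K}|x'|y\le\tfrac{1}{2\pi}\ell^2$ — a crude argument only yields the constant $\tfrac14$. I expect the genuinely delicate point to be assembling these pieces into a single Sobolev map with trace exactly $\eta$ for curves that cross repeatedly between $X$ and the collar, the constant $\tfrac{1}{2\pi}$ emerging from the flat half-plane isoperimetric bound.
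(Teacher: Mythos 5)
Your arguments for parts \ref{en7}--\ref{en11} coincide with the paper's own proof: the same fiberwise collapse $H$, the same reading of the explicit formula \eqref{e21} for the embedding, the net bound, geodesicity and the Hausdorff-measure splitting, and for \ref{en11} the same two cases (ball meets $X$ or not) with the same output $\bar{\rho}(r)=\rho(2r)+r$. These parts are correct and need no further comment.

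Part \ref{en12} is where the substance of the theorem lies, and there your proposal has a genuine gap: the estimate that produces the constant $\tfrac{1}{2\pi}$ is identified but not proved. Concretely, after reducing to the non-winding case you state that one needs a sharp flat isoperimetric bound of the form $\int_{\K}|x'|y\leq\tfrac{1}{2\pi}\,l(\eta)^2$ for the vertical collapse of the collar excursions, concede that your argument only yields the constant $\tfrac{1}{4}$ (the tent-function bound $y(t)\leq\min\{t,k-t\}$ along an excursion of length $k$), and defer the treatment of curves crossing infinitely often between $X$ and the collar to an unspecified assembly step. That is exactly the content of claim~\ref{cl1} in the paper; granting your crude bound, your outline proves the statement only with the weaker constant $C+\max\{\tfrac{1}{4},\tfrac{R}{L}\}$, not the asserted one.

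The paper fills this gap by a different device: in the mixed case it does not estimate the vertical collapse at all. Working in the unrolled flat collar (legitimate since $l(\eta)<L$ excludes winding), it homotopes each collar excursion $\alpha$ onto its chord, which lies on the base line and hence maps into $X$, using Reshetnyak's majorization theorem: there exist a convex compact $C\subset\R^2$ and a $1$-Lipschitz map $C\rightarrow\R^2$ whose boundary restriction parametrizes $\alpha\cdot\bar{\beta}$; since $C$ is convex, has perimeter at most $2l(\alpha)$ and contains a straight segment in its boundary, reflection across that segment plus the planar isoperimetric inequality give $\Leb^2(C)\leq\tfrac{1}{2\pi}l(\alpha)^2$, hence a homotopy of area at most $\tfrac{1}{2\pi}k_i^2$, where $k_i$ is the length of the $i$-th excursion. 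The infinitely many crossings are handled by truncation: for given $\epsilon>0$ only finitely many excursions, carrying all but $\epsilon$ of the collar length, are treated sharply, and the remainder is collapsed vertically at total cost at most $\epsilon R$. This establishes the $(C+D',l_0)$-inequality for every $D'>\max\{\tfrac{1}{2\pi},\tfrac{R}{L}\}$, and the final passage to $C'$ itself is a citation of \cite{Crearb} that the isoperimetric constant survives this limit --- a step also absent from your outline. I note in passing that the inequality you wanted is in fact true and would give an even cleaner proof: replacing $x(t)$ by $\int_0^t|x'|$ preserves the speed and the height function and turns $\int|x'|y$ into the area below a monotone graph, which Dido's inequality bounds by $\tfrac{1}{2\pi}k^2$ per excursion; summing, the single homotopy $H\circ\eta$ would then settle the mixed case with no assembly problem and no limiting argument. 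But neither this monotonization trick nor any substitute for it appears in your proposal, so as written the decisive step is missing.
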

\begin{proof}
\textbf{(\ref{en7})} Define the strong deformation retraction $H$ by setting $H_t(p,s):=(p,ts)$ for $(p,s)\in Y$ and $H_t(x):=x$ for $x\in X$. The claim follows by the representation~\eqref{e21}.\par
\textbf{(\ref{en8})} This is an immediate consequence of the fact that $X$ and $Y$ are geodesic and the representation \eqref{e21}.\par 
\textbf{(\ref{en9})} By the representation~\eqref{e21} the subset $X_{\gamma,L,R}\setminus X$ is locally isometric to $Y\setminus (\K \times \{0\})$. So
\begin{equation}
\label{e22}
\mathcal{H}^2(X_{\gamma,L,R})=\mathcal{H}^2(X)+\mathcal{H}^2(X_{\gamma,L,R}\setminus X)=\mathcal{H}^2(X)+\mathcal{H}^2(Y)=\mathcal{H}^2(X)+LR.
\end{equation}
\ \ \ \textbf{(\ref{en10})} For $p,q \in \K$ one has $d(\gamma(p),\gamma(q))=d_Y((p,0),(q,0))$.
So $l(\gamma_R)=L$ and $\gamma_R$ is $1$-chord-arc.\par 
\textbf{(\ref{en11})} Let $x \in X\subset X_{\gamma,L,R}$ and $r\in (0,R_0)$. Then for $y \in X_{\gamma,L,R}$ and $t\in [0,1]$ one has 
\begin{equation}
d(x,H_t(y))= d(H_t(x),H_t(y))\leq d(x,y).
\end{equation}
So for $r\in (0,R_0)$ the homotopy $H$ restricts to a strong deformation retraction of $B^d(x,r)$ onto $B^{d_X}(x,r)$ and by assumption $B^{d_X}(x,r)$ is contractible inside $B^{d_X}(x,\rho(r))\subset B^d(x,\rho(r))$.\par 
Now let $x \in X_{\gamma,L,R}$ arbitrary and $0< r <R_0'$. If $B^d(x,r)\cap X=\emptyset$, then $B^d(x,r)=B^{d_Y}(x,r)$ is homeomorphic to a ball in $\R^2$ and hence contractible inside itself. So assume $y \in X \cap B^d(x,r)$. Then $B^d(x,r)\subset B^d(y,2r)$. By our previous observation this is contractible inside $B^d(y,\rho(2r))$. But $B^d(y,\rho(2r))\subset B^d(x,\bar{\rho}(r))$. This proofs \ref{en11}.\par
\textbf{(\ref{en12})} Let $\eta:\K \rightarrow X_{\gamma,L,R}$ be a closed Lipschitz curve of length $l(\eta)<l_0$ and $D=\max\left\{(2\pi)\inv,L\inv R\right\}$.
\begin{Cla}
\label{cl1}
For every $D'>D$ there exists a Lipschitz homotopy $h:\eta\rightarrow \nu$ such that $l(\nu)\leq l(\eta)$, $\tn{Area}(h)\leq D'\cdot l(\eta)^2$ and either $\nu$ is constant or $\tn{im}(\nu)\subset X$.
\end{Cla} 
Assume the claim holds. By assumption there exists $v \in W^{1,2}(\Do,X_{\gamma,L,R})$ such that $\tn{tr}(v)=\nu$ and $\tn{Area}(v)\leq C \cdot l(\nu)^2$. So by lemma~\ref{l1} and claim~\ref{cl1} for every $D'>D$ there exists $u \in W^{1,2}(\Do,X_{\gamma,L,R})$ such that $
\tn{tr}(u)=\eta$ and
\begin{equation}
\label{e23}
\tn{Area}(u)=\tn{Area}(h)+\tn{Area}(u)\leq D'\cdot l(\eta)^2+C\cdot l(\nu)^2\leq (C+D')\cdot l(\eta)^2.
\end{equation}
So $X_{\gamma,L,R}$ satisfies a $(C'',l_0)$-quadratic isoperimetric inequality for every $C''>C'$. Hence by \cite{Crearb} $X_{\gamma,L,R}$ satisfies a $C'$-quadratic isoperimetric inequality.\end{proof}
%If one is not interested in the concrete quadratic isoperimetric constant one could proceed in the proof of claim~\ref{cl1} as follows. If either $l(\eta)\geq L$ or $\tn{im}(\eta)\cap X\neq \emptyset$ set $h_t(s):=H_t(\eta(s))$. Otherwise one may homotope $\eta$ to a point in $X_{\gamma,L,R}$ as good as in the Euclidean plane. This works to proof that $X_{\gamma,L,R}$ satisfies a $(C+1,l_0)$-quadratic isoperimetric inequality. The finer calculation goes as follows.
\begin{proof}[Proof of claim~\ref{cl1}]
Let $U:=\eta\inv(X_{\gamma}\setminus X)$. Set $h^\eta_t(s):=H_t(\eta(s))$. Then one has $h^\eta:\eta \rightarrow \nu$ where $l(\nu)\leq l(\eta)$, $\tn{im}(\eta)\subset X$ and
\begin{equation}
\tn{Area}(\eta)\leq R \cdot l(\eta_{|U}).
\end{equation}
We consider three cases.\par
\textbf{\Rom{1}. $l(\eta)\geq L$:} In this case setting $h=h^\eta$ gives the desired homotopy as
\begin{equation}
\tn{Area}(h)\leq R \cdot l(\eta)\leq \frac{R}{L}\cdot l(\eta)^2.
\end{equation}
\ \ \ \textbf{\Rom{2}. $l(\eta)< L\land U =\K$:} As $l(\eta)<L$ one may fill $\eta$ as good as a planar curve. So there exists a Lipschitz homotopy $h:\eta\rightarrow \nu$ such that $\nu$ is constant and 
\[
\tn{Area}(h)\leq \frac{1}{4\pi}l(\eta)^2.
\]
\ \ \ \textbf{\Rom{3}. $l(\eta)< L\land U \neq \K$:} Taking $h=h^\eta$ would proof theorem~\ref{t12}.\ref{en12} upon replacing the constant $\frac{1}{2\pi}$ by $1$. For the finer result one has to homotope each piece of $\eta$ contained in $X_{\gamma}\setminus X$ to $X$ in a more optimal way.\par 
First note that by Reshetnyak's majorization theorem, see \cite{Res68}, for a planar Lipschitz curve $\alpha:[0,1]\rightarrow \R^2$ there is a compact convex  $C\subset \R^2$ and a $1$-Lipschitz map $m:C\rightarrow \R^2$ such that $m_{|\partial C}$ is length preserving and a parametrization of $\alpha\cdot \bar{\beta}$ where $\beta$ is a linear parametrization of the Euclidean segment $[\alpha(0),\alpha(1)]$. Hence there is a Lipschitz homotopy $g:\alpha\rightarrow \beta$ such that
\begin{equation}
\tn{Area}(g)\leq \Leb^2(C)\leq \Ha \cdot \frac{1}{4\pi} \cdot (2\cdot l(\alpha))^2=\frac{1}{2\pi}\cdot l(\alpha)^2.
\end{equation}
\ \par 
Decompose $U=\bigcupdot^\infty_{i=1}(v^i,w^i)$ where $(v^i, w^i)$ are possibly empty angular intervals. Set $\eta_1:=\eta$ and for $i\in \N$ let $k_i:=l(\eta_{|(v_i,w_i)})$. Using our observation on planar curves inductively for each $i\in \N$ there exists a Lipschitz homotopy $h^i:\eta_i\rightarrow \eta_{i+1}$ where $l(\eta_{i+1})\leq l(\eta_i)$, $\eta_{i+1}=\eta_i$ on $\K \setminus (v_i,w_i)$ and $\eta_{i+1}((v_i,w_i))\subset X$ such that
\begin{equation}
\tn{Area}(h^i)\leq \frac{1}{2\pi}\cdot k_i^2.
\end{equation}
Now let $\epsilon>0$ be arbitrary and $n\in \N$ such that 
$\sum^\infty_{i=n} k_i<\epsilon$. For $h=h_1\cdot h_2\cdot... \cdot h_{n-1} \cdot h^{\eta_{n}}:\eta\rightarrow \nu$ one has $l(\nu)\leq l(\eta)$, $\tn{im}(\nu)\subset X$ and
\begin{equation}
\tn{Area}(h)=\tn{Area}(h^{\eta_{n}})+\sum^{n-1}_{i=1} \tn{Area} (h^i)\leq \epsilon \cdot R+\sum^{n-1}_{i=1} \frac{1}{2\pi}\cdot k_i^2\leq \epsilon \cdot R+ \frac{1}{2\pi}\cdot l(\alpha)^2.
\end{equation}
As $\epsilon >0$ was arbitrary this proves the claim.
\end{proof}
\section{Limits of discs}
\label{4}
\subsection{Under chord-arc condition}
\label{4.1}
The following is a slightly weaker version of~$(6.4)$ in \cite{Why35}.
\begin{Thm}[\cite{Why35}]
\label{t13}
Let $\rho:(0,R)\rightarrow \R$ a contractibility function and $X$ a compact metric space. Let $Z_n \subset X$ be metric discs such that both $Z_n$ and $\partial Z_n$ are in $\tn{LGC}(\rho)$ when endowed with the subspace metric. If $Z_n\rightarrow_H Z$ and $\partial Z_n \rightarrow_H S$ where $Z,S\subset X$ are compact, then $Z$ is metric disc and $S=\partial Z$.
\end{Thm}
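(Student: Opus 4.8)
The plan is to reduce the two-dimensional statement to the corresponding statement for the $2$-sphere by a doubling construction, the uniform local contractibility entering both to keep the doubles uniformly locally connected and to control the limit.

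\textbf{Step 1: the limits are Peano continua.} Hausdorff limits of connected subsets of a compact space are connected, so $Z$ and $S$ are continua, and $\partial Z_n\subset Z_n$ gives $S\subseteq Z$. I would first record that uniform local connectivity passes to the limit: since each $Z_n\in\tn{LGC}(\rho)$ is in particular uniformly locally connected, a chaining argument (Whyburn's $0$-regular convergence) shows that any two sufficiently close points of $Z$ lie in a small connected subset, so $Z$ is locally connected; the same argument applies to $S$. Being compact, connected and locally connected, $Z$ and $S$ are Peano continua.

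\textbf{Step 2: doubling.} Form the metric doubles $W_n:=Z_n\cup_{\partial Z_n}Z_n$, each homeomorphic to the $2$-sphere. The hypothesis on the boundaries is used precisely here: gluing two copies of a space in $\tn{LGC}(\rho)$ along a subset in $\tn{LGC}(\rho)$ again produces a space in $\tn{LGC}(\rho'')$ for a slightly enlarged contractibility function, so the $W_n$ are uniformly locally contractible $2$-spheres; note that this step needs no limit. Since doubling is continuous with respect to Hausdorff convergence of the pairs $(Z_n,\partial Z_n)$, the spheres $W_n$ converge (in the Gromov--Hausdorff sense) to $W:=Z\cup_S Z$, which is a Peano continuum by Step~1.

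\textbf{Step 3: characterization and conclusion.} It remains to identify the limits topologically. I would show that $S$, a uniformly $\tn{LGC}$ limit of the circles $\partial Z_n\cong\K$, is again a simple closed curve, and that $W$, a uniformly $\tn{LGC}$ limit of $2$-spheres, is homeomorphic to $S^2$; the latter I would obtain from Zippin's characterization of the $2$-sphere as a Peano continuum in which every simple closed curve separates and no arc separates, by verifying that these separation properties survive the regular limit. Granting this, $S$ is a Jordan curve in $W\cong S^2$, so by the Jordan curve theorem $W\setminus S$ has exactly two components; by construction $W\setminus S$ is the disjoint union of two copies of $Z\setminus S$, which forces $Z\setminus S$ to be connected and each copy to be a complementary domain of $S$. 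By the Schoenflies theorem the closure of each such domain is a $2$-cell with boundary $S$, and this closure is exactly $Z$. Therefore $Z\cong\Dc$ and $S=\partial Z$.

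\textbf{Main obstacle.} The genuinely hard part is Step~3: passing from the soft data, that $W$ is a locally connected Hausdorff limit of $2$-spheres, to an actual homeomorphism with $S^2$, which is the essential two-dimensional content of \cite{Why35}. This is where the uniform $\tn{LGC}$ bound must be exploited to exclude wild limits, for instance to ensure that a limiting simple closed curve still separates and that no separating arcs or local branching are created in the passage to the limit. Verifying the full separation axioms of Zippin's theorem for the regular limit, rather than merely its local-connectivity shadows, is the crux of the argument.
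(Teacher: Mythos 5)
The paper offers no proof of this statement to compare against: it is quoted, in slightly weakened form, as result $(6.4)$ of \cite{Why35} and is used as a black box. So the only question is whether your proposal would constitute an independent proof, and it would not: the entire two-dimensional content is deferred to Step~3, as you yourself concede. That a uniformly $\tn{LGC}$ Hausdorff limit of $2$-spheres is again a $2$-sphere, and that the limit of the boundary circles is a simple closed curve, are statements of exactly the same depth as the theorem itself --- they are further instances of the same circle of results of Whyburn on regular convergence of $2$-manifolds. Naming Zippin's characterization identifies a possible route, but the work lies in verifying its hypotheses for the limit (that every simple closed curve in $W$ separates it and that no arc does); your sketch gives no indication of how the uniform contractibility bound yields these separation properties, which is exactly what must be done to exclude wild limits. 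So the crux is missing rather than proved; what remains is a (sensible) reduction of the disc case to the sphere case, granted the sphere case.

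There are also secondary gaps in Step~2, which asserts without argument (i) that the metric double of an $\tn{LGC}(\rho)$ disc along an $\tn{LGC}(\rho)$ boundary is $\tn{LGC}(\rho'')$ with $\rho''$ depending only on $\rho$ --- a uniform gluing theorem that is believable (a metric analogue of the ANR union theorem) but not free, since a small ball centered near the gluing curve is a union of two half-balls and a contraction must be assembled from contractions of the pieces and of their intersection; and (ii) that doubling is continuous from Hausdorff convergence of the pairs $(Z_n,\partial Z_n)$ to Gromov--Hausdorff convergence of the doubles, where the quotient metric is an infimum over chains crossing the gluing locus, so distortion estimates must actually be checked. Finally, your endgame tacitly assumes $Z\neq S$: if the discs collapsed onto their boundary circles, $W$ would be a circle rather than a sphere, and ruling out this collapse is yet another point where the uniform $\tn{LGC}$ hypothesis has to be invoked explicitly rather than implicitly through the unproved claim that $W\cong S^2$.
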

We will only use the following corollary of Whyburn's result.
\begin{Thm}
\label{t14}
Let $L\in (0,\infty)$, $\lambda\in [1,\infty)$ and $\rho:(0,R_0)\rightarrow \R$ a contractibility function. Let $(Z_n)$ be a sequence of metric discs such that $Z_n\in\tn{LGC}(\rho)$, $l(\partial Z_n)\leq L$ and $\partial Z_n$ is $\lambda$-chord-arc.\par
If $Z_n\rightarrow_{GH}Z$ for a compact metric space $Z$, then either $Z=\{*\}$ or $Z$ is a metric disc such that $l(\partial Z)\leq L$ and $\partial Z_n \rightarrow_{GH} \partial Z$.
\end{Thm}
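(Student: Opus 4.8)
The plan is to reduce Theorem~\ref{t14} to Whyburn's Theorem~\ref{t13} by realizing the Gromov--Hausdorff convergence as Hausdorff convergence inside a single compact ambient space, and then to check that the chord-arc hypothesis forces the boundaries $\partial Z_n$ to lie in $\tn{LGC}(\rho')$ for a suitable contractibility function $\rho'$, which is the ingredient Whyburn's result needs but which is not assumed directly.

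First I would pass to a common model. Since $Z_n\rightarrow_{GH}Z$ with $Z$ compact, there is a compact metric space $X$ into which all $Z_n$ and $Z$ embed isometrically and such that $Z_n\rightarrow_H Z$ in $X$, see \cite{BBI01}. As each $\partial Z_n$ is a compact subset of $X$, the Blaschke selection theorem lets me pass to a subsequence along which $\partial Z_n\rightarrow_H S$ for some compact $S\subset X$. The isometric embeddings carry the intrinsic metric of $Z_n$ and the chord (subspace) metric of $\partial Z_n$, so the hypothesis $Z_n\in\tn{LGC}(\rho)$ persists in $X$, and it remains to produce a contractibility function for the boundaries. The key point, and the reason the chord-arc condition is imposed, is that a $\lambda$-chord-arc Jordan curve is uniformly locally contractible in its chord metric. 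Parametrizing $\partial Z_n$ by arclength, the chord-arc inequality shows that for $x\in\partial Z_n$ and $r$ below a scale comparable to $l(\partial Z_n)/\lambda$ the chord-ball $B(x,r)$ is contained in an arc $A\subset B(x,\lambda r)$, which deformation retracts to $x$ within itself; hence $\partial Z_n\in\tn{LGC}(\rho')$ with $\rho'(r)=\lambda r$. Replacing $\rho$ by $\max\{\rho,\rho'\}$ on a common radius, I can then invoke Theorem~\ref{t13}: if $Z$ is not a single point the limit $Z$ is a metric disc and $S=\partial Z$, while the degenerate possibility $\tn{diam}(Z)=0$ is precisely the alternative $Z=\{*\}$.

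It remains to bound the boundary length and to upgrade to genuine convergence. For the length bound I would not argue from Hausdorff convergence of sets, which does not control length, but instead from the uniform parametrizations: the chord-arc condition makes the arclength maps $c_n$ uniformly Lipschitz and uniformly non-degenerate, so after a further subsequence with $l(\partial Z_n)\to\ell\leq L$, Arzel\`a--Ascoli yields a limit parametrization $c$ of $S=\partial Z$. The lower chord-arc bound survives in the limit and shows $c$ is injective, hence a parametrization of the boundary circle, and lower semicontinuity of length under uniform convergence gives $l(\partial Z)\leq\liminf l(\partial Z_n)\leq L$. Finally, to promote the subsequential statement $\partial Z_n\rightarrow_H\partial Z$ to $\partial Z_n\rightarrow_{GH}\partial Z$, I would run the entire argument on an arbitrary subsequence: every subsequence has a further subsequence along which the boundaries Gromov--Hausdorff converge to the fixed limit $\partial Z$, which forces convergence of the full sequence.

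I expect the main obstacle to be the passage of the length bound to the limit together with the verification that the boundary $\tn{LGC}$ scale can be chosen uniformly in $n$; both hinge on extracting enough uniformity from the chord-arc condition to prevent the boundary curve from degenerating independently of the disc, and it is exactly here, rather than in the citation of Whyburn's theorem, that the quantitative work lies.
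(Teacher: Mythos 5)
Your overall strategy---realize the Gromov--Hausdorff convergence as Hausdorff convergence in a common compact ambient space, extract a contractibility function for the boundaries from the chord-arc condition, and invoke Whyburn's theorem---is exactly the paper's, and your treatment of the non-degenerate case (the bound $\rho'(r)=\lambda r$, the length bound via uniformly Lipschitz parametrizations, and the subsequence trick upgrading $\partial Z_n\rightarrow_H \partial Z$ to convergence of the full sequence) is sound. But there is a genuine gap, and it sits precisely at the point you flag as ``the main obstacle'' and then leave unresolved: the existence of a common radius for the boundary contractibility functions. Your $\rho'$ is only defined on $\bigl(0,\,l(\partial Z_n)/(2\lambda)\bigr)$, so a common radius exists if and only if $\liminf_n l(\partial Z_n)>0$. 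Nothing in your argument rules out the scenario where $l(\partial Z_{n_k})\rightarrow 0$ along a subsequence while $Z\neq\{*\}$; in that scenario Whyburn's theorem is simply not applicable, and your sentence ``the degenerate possibility $\tn{diam}(Z)=0$ is precisely the alternative $Z=\{*\}$'' asserts, rather than proves, the required dichotomy. Note also that the fix you anticipate cannot work: the chord-arc condition is scale invariant, so it can never ``prevent the boundary curve from degenerating independently of the disc.'' Round spheres with a tiny cap removed have uniformly chord-arc boundaries whose lengths tend to $0$ while the discs stay large, and the limit is a sphere---neither a disc nor a point.

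What closes the gap, and what the paper does as its very first step, is an argument using the hypothesis $Z_n\in\tn{LGC}(\rho)$ on the \emph{discs} themselves. Suppose $l(\partial Z_n)\leq r<R_0$ and pick $z\in\partial Z_n$; then $\partial Z_n\subset B(z,r)$, and the LGC condition gives a homotopy contracting $B(z,r)$, hence the boundary circle, inside $B(z,\rho(r))$. A null-homotopy of $\partial Z_n$ inside $Z_n$ must have image all of $Z_n$: if it missed an interior point $p$, the boundary circle would be null-homotopic in $Z_n\setminus\{p\}$, contradicting that it generates $\pi_1(Z_n\setminus\{p\})\cong\Z$. Hence $Z_n\subset B(z,\rho(r))$ and $\tn{diam}(Z_n)\leq 2\rho(r)$. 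Consequently, if $\liminf_n l(\partial Z_n)=0$, then $\tn{diam}(Z_n)\rightarrow 0$ along a subsequence and $Z=\{*\}$; otherwise $l(\partial Z_n)\geq l>0$ for all but finitely many $n$, which is exactly the uniform lower bound that makes your common radius $l/(2\lambda)$ exist and lets the rest of your proof run. This also explains why the sphere example above does not contradict the theorem: such discs cannot be uniformly LGC. With this case distinction inserted at the start, your proposal becomes a complete proof along the same lines as the paper's.
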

\begin{proof}
If $l(\partial Z_n)\leq r<R_0$ then for $z\in Z_n$ one has $\partial Z_n\subset B(z,r)$ and there is a homotopy $h$ contracting $B(z,r)$ inside $B(z,\rho(r))$. By topological reasons the image of $h$ must be surjective onto $Z$ and hence $\tn{diam}(Z_n)\leq 2\rho(r)$. So if \[
\liminf_{n\rightarrow \infty} \ l(\partial Z_n)=0,\]
then $Z=\{*\}$.\par
So assume without loss of generality that $l(\partial Z_n)\geq l>0$ for all $n \in \N$. Then $\partial Z_n \in \tn{LGC}(\hat{\rho})$ where $\hat{R}_0:=\frac{l}{2\lambda}$, $\hat{\rho}:(0,\hat{R}_0)\rightarrow \R$ and $\hat{\rho}(r)=\lambda \cdot r$. By the proof of Gromov's compactnes criterion in \cite{Gro81a} there exists a compact metric space $X$ and isometric embeddings $Z_n\hookrightarrow X, Z\hookrightarrow X$ such that $Z_  n\rightarrow_H Z$ with respect to these embeddings. As $l(\partial Z_n)$ is uniformly bounded above by the Arzela-Ascoli theorem there exists a compact set $S\subset X$ such that $\partial Z_n \rightarrow_H S$. By theorem~\ref{t13} one has that $Z$ is a metric disc and $\partial Z=S$. Lower semicontinuity of length implies $l(\partial Z)\leq L$.
\end{proof}
Remember that $\mathcal{D}(L,A,C,l_0)$ is the class of geodesic metric discs $Z$ satisfying a $(C,l_0)$-quadratic isoperimetric inequality, $l(\partial Z)\leq L$ and $\mathcal{H}^2(Z)\leq A$. Let $\mathcal{D}^\lambda(L,A,C,l_0)$ be the set of those $Z\in \mathcal{D}(L,A,C,l_0)$ such that $\partial Z$ is $\lambda$-chord-arc.
\begin{Thm}
\label{t15}
$\mathcal{D}^\lambda(L,A,C,l_0)\cup \{*\}$ is compact.
\end{Thm}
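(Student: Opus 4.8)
The plan is to deduce compactness from the precompactness already recorded in Corollary~\ref{c1} together with the disc recognition result theorem~\ref{t14}. Since $\mathcal{D}^\lambda(L,A,C,l_0)\subset\mathcal{D}(L,A,C,l_0)$, Corollary~\ref{c1} shows the class is precompact, so it suffices to prove that the Gromov--Hausdorff limit of a convergent sequence lies in $\mathcal{D}^\lambda(L,A,C,l_0)\cup\{*\}$. Given a sequence in $\mathcal{D}^\lambda(L,A,C,l_0)\cup\{*\}$, either infinitely many terms equal $\{*\}$, in which case a subsequence converges to $\{*\}$, or we may assume all terms are $Z_n\in\mathcal{D}^\lambda(L,A,C,l_0)$ with $Z_n\rightarrow_{GH}Z$ and $Z$ compact. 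For each $n$ choose by theorem~\ref{t9} a solution $u_n$ of Plateau's problem for $\partial Z_n$ in $Z_n$; by theorem~\ref{t11} the map $\hat u_n\colon Z_{u_n}\to Z_n$ is an isometry, so theorem~\ref{t10}.\ref{en5} gives $Z_n\in\tn{LGC}(\rho)$ for the fixed contractibility function $\rho(r)=(8C+1)r$ on $\left(0,\tfrac{l_0}{2}\right)$. As $l(\partial Z_n)\leq L$ and $\partial Z_n$ is $\lambda$-chord-arc, theorem~\ref{t14} applies and yields that either $Z=\{*\}$, in which case we are done, or $Z$ is a metric disc with $l(\partial Z)\leq L$ and $\partial Z_n\rightarrow_{GH}\partial Z$.

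Assume henceforth $Z\neq\{*\}$. Being a Gromov--Hausdorff limit of compact length spaces, $Z$ is a compact length space and hence geodesic. The curves $\partial Z_n$ are $\lambda$-chord-arc Jordan curves of length at most $L$, so their arc-length parametrizations are uniformly bi-Lipschitz onto round circles; the convergence $\partial Z_n\rightarrow_{GH}\partial Z$ then forces $\partial Z$ to be a $\lambda$-chord-arc Jordan curve as well, while lower semicontinuity of length gives $l(\partial Z)\leq L$. It remains to verify the two quantitative bounds defining $\mathcal{D}(L,A,C,l_0)$, and for these I would embed all spaces isometrically into a common compact $X$ with $Z_n\rightarrow_H Z$, as in the proof of theorem~\ref{t14}.

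For the area bound, note that the proof of theorem~\ref{t9} shows that each $Z_n$ satisfies a global quadratic isoperimetric inequality with the uniform constant $C'=\max\{C,A l_0^{-2}\}$, while theorem~\ref{t7} gives $E^2_+(u_n)\leq 2\,\tn{Area}(u_n)=2\,\mathcal{H}^2(Z_n)\leq 2A$. Since $\partial Z_n$ is $\lambda$-chord-arc, theorem~\ref{t8} (applied with this global inequality) provides Möbius normalizations $\bar u_n=u_n\circ m_n\in W^{1,p}(\Do,X)$ with $p=p(C',\lambda)>2$ and a uniform bound on $E^p_+(\bar u_n)$; by theorem~\ref{t4} the $\bar u_n$ are uniformly Hölder, so after passing to a subsequence $\bar u_n\to u$ uniformly with $u(\Dc)\subset Z$. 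Theorem~\ref{t6} then yields $u\in W^{1,p}(\Do,X)$ with $\tn{Area}(u)\leq\liminf\tn{Area}(u_n)\leq A$, and $u$ satisfies Lusin's property~(N) by theorem~\ref{t4}. As a uniform limit of monotone parametrizations whose traces converge to a monotone parametrization of $\partial Z$, the map $u$ is monotone and surjects onto $Z$; since $\tn{Area}(u)<\infty$, the area formula theorem~\ref{t5} forces $\tn{card}(u\inv(z))=1$ for $\mathcal{H}^2$-almost every $z$, whence $\mathcal{H}^2(Z)=\tn{Area}(u)\leq A$.

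The main obstacle is the quadratic isoperimetric inequality for $Z$ itself, since neither $\mathcal{H}^2$ nor the filling inequality is manifestly stable under Gromov--Hausdorff convergence. My plan is to prove stability directly: given a Lipschitz curve $\gamma\colon\K\to Z$ with $l(\gamma)<l_0$, use the geodesic structure of $Z_n$ to build Lipschitz curves $\gamma_n\colon\K\to Z_n$ with $\gamma_n\to\gamma$ and $l(\gamma_n)\to l(\gamma)$, fill each $\gamma_n$ by a Plateau solution $v_n$ in $Z_n$ with $\tn{Area}(v_n)\leq C\,l(\gamma_n)^2$ and hence, by theorem~\ref{t7}, with uniformly bounded energy $E^2_+(v_n)$, and extract an $L^2$-limit $v\in W^{1,2}(\Do,Z)$. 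Theorem~\ref{t6} then gives $\tn{Area}(v)\leq C\,l(\gamma)^2$, and identifying $\tn{tr}(v)=\gamma$ shows that $Z$ satisfies the $(C,l_0)$-quadratic isoperimetric inequality. The delicate points, which I expect to be the heart of the argument, are the compactness needed to extract the $L^2$-limit from the energy bound alone and the control of the boundary behaviour ensuring $\tn{tr}(v)=\gamma$ in the limit. Together with the previous paragraph this shows $Z\in\mathcal{D}^\lambda(L,A,C,l_0)$ and completes the proof.
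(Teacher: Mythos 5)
Your argument tracks the paper's own proof almost step for step: precompactness via corollary~\ref{c1}, the $\tn{LGC}(\rho)$ bound from theorem~\ref{t10}.\ref{en5} (via theorem~\ref{t11}) feeding into theorem~\ref{t14}, lower semicontinuity of length for the chord-arc property, and the $\mathcal{H}^2(Z)\leq A$ bound via uniformly energy-bounded Plateau solutions $u_n$, Arzela--Ascoli, theorem~\ref{t6} and the area formula. (One small remark on that last part: your claim that $u$ is monotone as a uniform limit of monotone maps is both unjustified and unnecessary; uniform limits of monotone maps need not be monotone, but you only need surjectivity of $u$ and $\tn{card}(u\inv(z))\geq 1$ to conclude $\mathcal{H}^2(Z)\leq\tn{Area}(u)$, which is exactly what the paper does.)

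The genuine gap is in your final paragraph, the stability of the $(C,l_0)$-quadratic isoperimetric inequality under Gromov--Hausdorff convergence. The paper does not prove this at all: it invokes an external result, citing \cite{Crearb} for precisely this stability statement. You instead sketch a direct proof and explicitly leave its ``delicate points'' open, and those points are not routine. First, a Lipschitz curve $\gamma$ in $Z$ with $l(\gamma)<l_0$ need not be a Jordan curve, so ``fill each $\gamma_n$ by a Plateau solution'' is not meaningful in the framework of the paper (Plateau's problem and the energy bound \eqref{e10} of theorem~\ref{t7} are formulated for Jordan curves); to get fillings $v_n$ with both $\tn{Area}(v_n)\leq C\, l(\gamma_n)^2$ and a uniform energy bound requires a separate argument. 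Second, even granting an $L^2$-limit $v$ of the $v_n$ (which itself needs a Rellich-type compactness argument for metric-space-valued maps, not just the energy bound), the identification $\tn{tr}(v)=\gamma$ is the hardest step: the trace operator on $W^{1,2}(\Do,X)$ is not continuous under $L^2$-convergence with bounded energy without further work, and here the maps $v_n$ take values in different subsets $Z_n$ of the ambient space, with boundary curves $\gamma_n$ that must first be constructed from $\gamma$ through discontinuous GH-approximations (discretize and join by geodesics, with length control). Since the quadratic isoperimetric inequality for $Z$ is one of the defining conditions of membership in $\mathcal{D}^\lambda(L,A,C,l_0)$, your proof is incomplete until these points are settled; the honest options are either to carry out that argument in full or, as the paper does, to quote the stability result of \cite{Crearb}.
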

\begin{proof}
By corollary~\ref{c1} $\mathcal{D}(L,A,C,l_0)$ is precompact and hence so is $\mathcal{D}^\lambda(L,A,C,l_0)$. So it remains to show that $\mathcal{D}^\lambda(L,A,C,l_0)\cup \{*\}$ is closed.\par 
Let $Z^n \in \mathcal{D}^\lambda(L,A,C,l_0)$ and $Z_n \rightarrow_{GH} Z$. By theorem~\ref{t10}.\ref{en5} there is a contractibility function $\rho$ such that $Z^n\in \tn{LGC}(\rho)$. So theorem~\ref{t14} implies that either $Z=\{*\}$ or $Z$ is a metric disc such that $l(\partial Z) \leq L$ and $\partial Z_n \rightarrow_{GH} \partial Z$. Assume we are in the later case.\par
By lower semicontinuouity of length one has that $\partial Z$ is $\lambda$-chord-arc. The conditions of being geodesic and satisfying a $(C,l_0)$-quadratic isoperimetric inequality are both stable under Gromov-Hausdorff limits, see \cite{BBI01} and \cite{Crearb} respectively. Note that the proof of theorem~\ref{t3} is completed at this point. For the local version it only remains to check $\mathcal{H}^2(Z)\leq A$.\par 
As in the proof of theorem~\ref{t9} all $Z_n$ satisfy a $\hat{C}$-quadratic isoperimetric inequality where $\hat{C}=\hat{C}(A,C,l_0)$. So by theorem~\ref{t7}, theorem~\ref{t8} and theorem~\ref{t9} there exist constants $M\in[0,\infty)$, $p\in (2,\infty)$ depending only on $\hat{C},\lambda$ and maps \[
u_n \in W^{1,p}(\Do,Z_n)\cap C^0(\Dc,Z_n)\cap \Lambda(\partial Z_n, Z_n)
\]
such that $\tn{Area}(u_n)=\mathcal{H}^2(Z_n)$ and
\begin{equation}
\label{e31}
\left(E^p_+(u_n)\right)^{\frac{1}{p}}\leq M \cdot \left(E^2_+(u_n)\right)^{\frac{1}{2}}\leq M\sqrt{2\cdot\tn{Area}(u_n)}=M\sqrt{2\cdot\mathcal{H}^2(Z_n)}\leq M\cdot \sqrt{2A}
\end{equation}
for all $n\in \N$. Especially by theorem~\ref{t4} the maps $u_n$ are uniformly Hölder and hence equicontinuous.\par 
As in the proof of theorem~\ref{t14} we may assume that $Z_n$ and $Z$ are subsets of a compact metric space $X$ and $Z_n \rightarrow_H Z$. Then by Arzela-Ascoli $u_n:\Dc\rightarrow X$ converges uniformly to a map $u\in C^0(\Dc,X)$. Especially $u_n \rightarrow_{L^p} u$ and hence by~\ref{t6} and \eqref{e31} $u \in W^{1,p}(\Do,X)$. So by theorem~\ref{t4} $u$ satisfies property Lusin's property~(N). As $u_n:\Dc\rightarrow Z_n$ is surjective one has $\tn{im}(u)=Z$. So by theorem~\ref{t5} and theorem~\ref{t6}
\begin{equation}
\label{e32}
\mathcal{H}^2(Z)\leq \tn{Area}(u)\leq \liminf_{n\rightarrow \infty} \tn{Area}(u_n)\leq A.
\end{equation}
and hence $Z \in \mathcal{D}^\lambda(L,A,C,l_0)$.
\end{proof}
\subsection{Without chord-arc condition}
\label{4.2}
Recall that a metric space $Z$ is called a disc retract if there is a Lipschitz curve $\gamma:\K \rightarrow X$ such that $X_\gamma$ is a topological disc. We say that $l(\partial Z)\leq L$ if there exists such $\gamma$ satisfying $l(\gamma)\leq L$. The following lemma shows that one may always assume $\gamma$ to be parametrized by constant speed.
\begin{Lem}
\label{l3}
Let $Z$ be a compact metric space, $\gamma:\K \rightarrow Z$ be a Lipschitz curve and $\bar{\gamma}:\K \rightarrow Z$ the constant speed parametrization of $\gamma$. If $Z_\gamma$ is a topological disc, then $Z_{\bar{\gamma}}$ is a topological disc.
\end{Lem}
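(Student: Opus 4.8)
The plan is to realize $Z_{\bar\gamma}$ as a monotone quotient of the disc $Z_\gamma$ and then invoke a decomposition theorem. We may assume $\gamma$ is non-constant, since otherwise $\bar\gamma=\gamma$ and there is nothing to prove. Writing $\ell:=l(\gamma)>0$, let $\phi:\K\to\K$ be the normalized arc-length map of $\gamma$; it is continuous, monotone of degree one and surjective, satisfies $\gamma=\bar\gamma\circ\phi$, and its point-preimages $\phi\inv(s)$ are precisely the maximal closed arcs on which $\gamma$ is constant (and single points elsewhere).

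Next I would define $\Phi:Z_\gamma\to Z_{\bar\gamma}$ by $\Phi|_Z=\mathrm{id}_Z$ and $\Phi(t,r):=(\phi(t),r)$ on the collar $\K\times I$. Since $(t,0)\sim\gamma(t)=\bar\gamma(\phi(t))\sim(\phi(t),0)$, the map respects the two gluing relations and descends to a continuous surjection. As $Z_\gamma$ is compact and $Z_{\bar\gamma}$ is Hausdorff, $\Phi$ is closed, hence a quotient map, so $Z_{\bar\gamma}$ is homeomorphic to the quotient $Z_\gamma/G$ by the decomposition $G$ into the fibres of $\Phi$; being the fibre decomposition of a closed map from a compact space, $G$ is automatically upper semicontinuous.

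I would then read off the fibres: over a point of $Z$, and over $(s,r)$ with $\phi\inv(s)$ a single point, the fibre is a single point, while over $(s,r)$ with $r\in(0,1]$ and $\phi\inv(s)=[a,b]$ a nondegenerate arc it is the arc $[a,b]\times\{r\}$. Thus every element of $G$ is a point or an arc, in particular a non-separating subcontinuum of $Z_\gamma$, and the elements meeting $\partial Z_\gamma=\K\times\{1\}$ meet it in a connected set (a point or a boundary arc), while none of them is all of $\partial Z_\gamma$ because $\phi$ is non-constant. Applying the disc version of Moore's monotone decomposition theorem (an upper semicontinuous decomposition of a $2$-cell into non-separating continua, none equal to the whole boundary and each meeting the boundary in a connected set, has quotient a $2$-cell) then yields that $Z_{\bar\gamma}\cong Z_\gamma/G$ is a topological disc.

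The main obstacle is pinning down and applying the correct form of the decomposition theorem, in particular verifying the behaviour along $\partial Z_\gamma$. If one prefers to rely only on the classical sphere version of Moore's theorem, I would instead double $Z_\gamma$ along its boundary to a $2$-sphere and extend $G$ symmetrically: the two copies of $\Phi$ agree on $\partial Z_\gamma$ since both act by $\phi$ there, so they glue to a monotone map of $S^2$ whose fibres are again arcs and points. Moore's theorem then gives that the double of $Z_{\bar\gamma}$ is $S^2$, and one recovers disc-ness by checking that $\Phi(\partial Z_\gamma)$ is a Jordan curve in this sphere whose two complementary Schoenflies discs are the images of the interiors of the two copies of $Z_\gamma$. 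This last bookkeeping with saturated open sets, ensuring the two interiors map to the two components of the complement of the Jordan curve, is the delicate point along that route.
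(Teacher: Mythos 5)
Your proposal is correct and takes essentially the same route as the paper: the paper defines the identical map ($P|_Z=\mathrm{id}$ and $P(p,t)=(M(p),t)$ for the monotone reparametrization $M=\phi$), observes exactly as you do that its fibres are single points or closed arcs, and then concludes via Corollary~7.12 of \cite{LW18a}, which is precisely a packaged Moore-type recognition theorem for discs (cell-like map with cell-like boundary restriction) where you invoke Moore's decomposition theorem directly. The only caveat is that your general statement of the disc version should exclude elements \emph{containing} (not just equalling) the whole boundary; this is harmless here, since in your decomposition every element meeting $\partial Z_\gamma$ is a point or a proper sub-arc of it.
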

\begin{proof}
Let $M:\K \rightarrow \K$ be monotone and such that $\gamma=\bar{\gamma}\circ M$. Define $P:X_{\gamma}\rightarrow X_{\bar{\gamma}}$ by \begin{equation}
\label{e33}
P(x)=\begin{cases}
(M(p),t) &; x=(p,t)\in \K \times [0,1]\\
x &;x \in X
\end{cases}
\end{equation}
Then $P$ is continuous. Furthermore if $x \in X$, then $P^{-1}(x)=\{x\}$ and if $p\in \K,t \in (0,1]$, then $P^{-1}((p,t))=I\times \{t\}$ where $I \subset \K$ is a closed interval. So $P$ defines a cell-like map $\Dc \cong X_{\gamma}\rightarrow X_{\bar{\gamma}}$ such that $P_{|\partial X_{\gamma}}$ is cell-like too. So by corollary~$7.12$ in \cite{LW18a} $X_{\bar{\gamma}}$ is homeomorphic to $\Dc$.
\end{proof}
Remember that $\mathcal{E}(L,A,C,l_0)$ is the set of geodesic metric disc retracts satisfying a $(C,l_0)$-quadratic isoperimetric inequality, $l(\partial Z)\leq L$ and $\mathcal{H}^2(Z)\leq A$.
\begin{Thm}
\label{t16}
Let $L,A,C\in (0,\infty)$ and $l_0 \in (0,\infty]$. Then $\mathcal{E}(L,A,C,l_0)$ is compact and
\begin{equation}
\mathcal{D}(L,A,C,l_0)\subset \mathcal{E}(L,A,C,l_0)\subset \overline{\mathcal{D}\left(L,A',C+(2\pi)\inv,l_0\right)}
\end{equation}
for every $A'>A$.
\end{Thm}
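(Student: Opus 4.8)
The plan is to transfer everything to the class of genuine metric discs via the mapping cylinder construction of section~\ref{3}. Given $Z\in\mathcal{E}(L,A,C,l_0)$ I fix a curve $\gamma:\K\to Z$ exhibiting $Z$ as a disc retract with $l(\gamma)\leq L$; by lemma~\ref{l3} I may take $\gamma$ of constant speed, so that $\tn{Lip}(\gamma)\leq\frac{L}{2\pi}$ and lemma~\ref{l2} applies for every $R\in(0,\infty)$. The inclusion $\mathcal{D}(L,A,C,l_0)\subset\mathcal{E}(L,A,C,l_0)$ is then immediate: a geodesic metric disc is a disc retract, since gluing a collar $\K\times[0,1]$ to $\Dc$ along $\partial\Dc$ via a parametrization of $\partial Z$ of length $l(\partial Z)\leq L$ again yields a topological disc, and the remaining bounds are untouched.

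For the second inclusion I approximate $Z$ by the spaces $Z_{\gamma,L,R}$ as $R\to 0$. By lemma~\ref{l2} the space $Z_{\gamma,L,R}$ metrizes the topological disc $Z_\gamma$, and it is geodesic by theorem~\ref{t12}.\ref{en8}, hence a geodesic metric disc; its boundary is $\gamma_R$, of length at most $L$. By theorem~\ref{t12}.\ref{en9} one has $\mathcal{H}^2(Z_{\gamma,L,R})=\mathcal{H}^2(Z)+LR\leq A+LR$, and by theorem~\ref{t12}.\ref{en12} it satisfies a $(C+\max\{(2\pi)\inv,R/L\},l_0)$-quadratic isoperimetric inequality. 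Hence for every $R\leq\min\{\frac{L}{2\pi},\frac{A'-A}{L}\}$ we get $Z_{\gamma,L,R}\in\mathcal{D}(L,A',C+(2\pi)\inv,l_0)$. Since $Z$ embeds isometrically as an $R$-net in $Z_{\gamma,L,R}$ by theorem~\ref{t12}.\ref{en7}, one has $d_{GH}(Z,Z_{\gamma,L,R})\leq R\to 0$, whence $Z\in\overline{\mathcal{D}(L,A',C+(2\pi)\inv,l_0)}$.

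It remains to show $\mathcal{E}(L,A,C,l_0)$ is compact. Precompactness is now automatic, since by the previous step $\mathcal{E}(L,A,C,l_0)$ sits inside $\overline{\mathcal{D}(L,A',C+(2\pi)\inv,l_0)}$, which is compact by corollary~\ref{c1} and completeness of Gromov-Hausdorff space. For closedness, let $Z_n\in\mathcal{E}(L,A,C,l_0)$ with $Z_n\to_{GH}Z$ and constant speed witnessing curves $\gamma_n$. Here I pass to the discs $W_n:=(Z_n)_{\gamma_n,L,L}$; the choice $R=L$ makes $\partial W_n$ a $1$-chord-arc curve of length $L$ by theorem~\ref{t12}.\ref{en10}, and combining the items of theorem~\ref{t12} gives $W_n\in\mathcal{D}^1(L,A+L^2,C+1,l_0)$. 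By theorem~\ref{t15} this class together with $\{*\}$ is compact, so a subsequence satisfies $W_n\to_{GH}W$ with $W$ in that class; as a $1$-chord-arc curve of length $L$ has diameter at least $\frac{L}{2}$, we have $\tn{diam}(W_n)\geq\frac{L}{2}$, so $W\neq\{*\}$ and $W\in\mathcal{D}^1(L,A+L^2,C+1,l_0)$ is a metric disc.

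To conclude I would identify $W$. Realizing $Z_n\subset W_n\to_H W$ in a common compact space and using that the $\gamma_n$ are uniformly Lipschitz, Arzel\`a--Ascoli gives, along a subsequence, $Z_n\to_H Z$ and $\gamma_n\to\gamma_\infty$ uniformly for a Lipschitz curve $\gamma_\infty:\K\to Z$ with $l(\gamma_\infty)\leq L$. Since the mapping cylinder metric is given by the explicit formula~\eqref{e21} in $d_Z$ and $\gamma$, the construction is continuous in this data, so $W$ is isometric to $(Z)_{\gamma_\infty,L,L}$; by lemma~\ref{l2} this metrizes the topology of $Z_{\gamma_\infty}$, which is therefore a topological disc, and so $Z$ is a disc retract with $l(\partial Z)\leq L$. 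Finally $\mathcal{H}^2(Z)=\mathcal{H}^2(W)-L^2\leq A$ by theorem~\ref{t12}.\ref{en9}, while being geodesic and satisfying a $(C,l_0)$-quadratic isoperimetric inequality are stable under Gromov-Hausdorff limits, so $Z\in\mathcal{E}(L,A,C,l_0)$. I expect the identification of the limit $W$ with the mapping cylinder $(Z)_{\gamma_\infty,L,L}$ to be the main obstacle: it requires extracting the convergent gluing curves $\gamma_n\to\gamma_\infty$ and verifying that~\eqref{e21} passes to the Gromov-Hausdorff limit, for instance by building explicit almost-isometries from the approximations $Z_n\to_H Z$ and $\gamma_n\to\gamma_\infty$.
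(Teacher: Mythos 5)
Your proposal is correct and follows essentially the same route as the paper: approximating a disc retract by the mapping cylinders $Z_{\gamma,L,R}$ with $R\to 0$ for the second inclusion, and proving closedness by passing to $W_n=(Z_n)_{\gamma_n,L,L}$, applying theorem~\ref{t15}, and identifying the limit with $(Z)_{\gamma_\infty,L,L}$ via the explicit formula~\eqref{e21}. Your extra care in ruling out $W=\{*\}$ via the diameter bound from the $1$-chord-arc boundary, and in restricting $R\leq\min\{\frac{L}{2\pi},\frac{A'-A}{L}\}$, only makes explicit what the paper leaves implicit.
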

\begin{proof}
It is clear that $\mathcal{D}_{L,A,C,l_0}\subset \mathcal{E}(L,A,C,l_0)$.\par 
Let $Z \in \mathcal{E}(L,A,C,l_0)$. By lemma~\ref{l3} there is a constant-speed curve $\gamma:\K \rightarrow X$ such that $l(\gamma)\leq L$ and $Z_\gamma$ is a disc. Let $Z^n:=Z_{\gamma,L,\frac{1}{n}}$. By theorem~\ref{t12} for $n>L\inv $ one has that $d_{GH}(Z,Z^n)\leq \frac{1}{n}$ and 
\begin{equation}
\label{e34}
Z^{n} \in \mathcal{D}\left(L,A+\frac{L}{n},C+\frac{1}{2\pi},l_0\right).
\end{equation}
So $\mathcal{E}_{L,A,C,l_0}\subset \overline{\mathcal{D}\left(L,A',C+(2\pi)\inv,l_0\right)}$.\par 
By corollary~\ref{c1} $\mathcal{D}\left(L,A',C+(2\pi)\inv,l_0\right)$ is precompact. So the only thing that remains to be checked is that $\mathcal{E}(L,A,C,l_0)$ is closed.\par
Let $Z^n\in \mathcal{E}(L,A,C,l_0)$ and $Z^n\rightarrow_{GH} Z$ where $Z$ is a compact metric space. Then as in the proof of theorem~\ref{t15} $Z$ is geodesic and satisfies a $(C,l_0)$-quadratic isoperimetric inequality. Let $\gamma^n:\K \rightarrow X$ be constant speed curves such that $l(\gamma^n) \leq L$ and $Z^n_{\gamma^n}$ is a disc. Again we may assume that all $Z_n$ and $Z$ are subsets of a compact metric space $X$ such that $Z^n\rightarrow_{H} Z$. As $\gamma^n$ are uniformly Lipschitz by the Arzela-Ascoli theorem up to passing to a subsequence there exists a limit curve~$\gamma:\K \rightarrow Z$ and $l(\gamma)\leq L$. \par
Set $W^n:=Z^n_{\gamma^n,L,L}$. Then by theorem~\ref{t12} one has
\begin{equation}
W^n \in \mathcal{D}^1(L,A+L^2,C+1,l_0)
\end{equation}
and hence by theorem~\ref{t15} after passing to a subsequence there is 
\[
W\in \mathcal{D}^1(L,A+L^2,C+1,l_0)
\] such that $W^n\rightarrow_{GH} W$. But by the explicit formula~\eqref{e21} in this case $W$ has to be isometric to $Z_{\gamma,L,L}$. So $Z$ is a disc retract and $l(\partial Z)\leq L$. Furthermore by theorem~\ref{t12}.\ref{en9}.
\begin{equation}
\label{e35}
\mathcal{H}^2(Z)=\mathcal{H}^2(W)-L^2\leq A+L^2-L^2=A.
\end{equation}
So $Z \in \mathcal{E}(L,A,C,l_0)$.
\end{proof}
\begin{Thm}
\label{t17}
Let $\rho$ be a contractibility function, $L\geq 0$ and $Z$ a compact metric space. Let $Z_n$ be a sequence of disc retracts such that $l(\partial Z_n)\leq L$ for all $n$ and $Z_n\rightarrow_{GH} Z$. Then $Z$ is a disc retract and $l(\partial Z)\leq L$. 
\end{Thm}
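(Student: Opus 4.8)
The plan is to follow the proof of Theorem~\ref{t16} almost verbatim, but to invoke Whyburn's corollary Theorem~\ref{t14} in place of the compactness statement Theorem~\ref{t15}; the point is that~\ref{t14} needs only a uniform local contractibility and a chord-arc bound, which the mapping cylinders will supply, rather than the isoperimetric and area bounds used in~\ref{t15}. Throughout I use the standing hypothesis $Z_n\in\tn{LGC}(\rho)$. I first apply Lemma~\ref{l3} to replace each defining boundary curve by a constant-speed parametrization $\gamma_n:\K\rightarrow Z_n$ with $l(\gamma_n)\leq L$ for which $(Z_n)_{\gamma_n}$ is still a topological disc. Constant speed gives $2\pi\,\tn{Lip}(\gamma_n)=l(\gamma_n)\leq L$, so by Lemma~\ref{l2} the metric mapping cylinder $W_n:=(Z_n)_{\gamma_n,L,L}$ is well defined and metrizes the disc topology of $(Z_n)_{\gamma_n}$. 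The case $L=0$ is trivial, since then every $\gamma_n$ is constant and a mapping cylinder of a constant curve is a disc only when the base is a point; so I assume $L>0$ from now on.

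By Theorem~\ref{t12}.\ref{en11} the cylinders $W_n$ all lie in $\tn{LGC}(\bar\rho)$ for a single contractibility function $\bar\rho$ depending only on $\rho$ and $L$, and by Theorem~\ref{t12}.\ref{en10} (applied with $R=L$) the boundary curve of each $W_n$ is $1$-chord-arc of length $L$. Since each $W_n$ is genuinely homeomorphic to $\Dc$, the sequence $(W_n)$ satisfies all the hypotheses of Theorem~\ref{t14} with uniform constants, so once I know that $(W_n)$ Gromov--Hausdorff converges,~\ref{t14} will force the limit to be either a point or a metric disc.

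To produce and identify this limit I realize all $Z_n$ and $Z$ isometrically inside a common compact space $X$ with $Z_n\rightarrow_H Z$, exactly as in the proofs of~\ref{t14} and~\ref{t15}, and apply Arzel\`a--Ascoli to the uniformly $L/(2\pi)$-Lipschitz curves $\gamma_n$ to extract a subsequence converging uniformly to a curve $\gamma:\K\rightarrow Z$ with $l(\gamma)\leq L$ and $2\pi\,\tn{Lip}(\gamma)\leq L$; thus $W:=Z_{\gamma,L,L}$ is admissible. The explicit distance formula~\eqref{e21} depends only on $d_X$ restricted to the base and on the gluing curve, with the cylinder factor $\K\times[0,1]$ and the scaling constants $L,L$ common to all $W_n$ and to $W$. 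Feeding the almost-isometry $Z_n\approx Z$ coming from $Z_n\rightarrow_H Z$ together with $\gamma_n\rightarrow\gamma$ into~\eqref{e21} therefore yields a correspondence between $W_n$ and $W$ whose distortion tends to $0$, so $W_n\rightarrow_{GH}W$ and $W$ is isometric to $Z_{\gamma,L,L}$. Theorem~\ref{t12}.\ref{en9} gives $\mathcal{H}^2(W)=\mathcal{H}^2(Z)+L^2>0$, so $W$ is not a point; hence by Theorem~\ref{t14} it is a metric disc. Since by Lemma~\ref{l2} the metric of $W=Z_{\gamma,L,L}$ metrizes the mapping-cylinder topology of $Z_\gamma$, the space $Z_\gamma$ is homeomorphic to $\Dc$. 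Thus $Z$ is a disc retract with boundary curve $\gamma$, and $l(\partial Z)\leq l(\gamma)\leq L$.

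The only genuinely nonformal step is the continuity of the mapping-cylinder construction, i.e.\ the verification that $W_n\rightarrow_{GH}Z_{\gamma,L,L}$: one must write down the correspondence that is the identity on the shared cylinder $\K\times[0,1]$ and an $\epsilon_n$-isometry between the bases compatible with $\gamma_n\rightarrow\gamma$, and then estimate, directly from~\eqref{e21}, that each of its three cases changes distances by at most $O(\epsilon_n+\|\gamma_n-\gamma\|_\infty)$. I expect this bookkeeping to be the main obstacle; everything else is either a direct citation of Theorem~\ref{t12} and Theorem~\ref{t14} or the purely topological passage from a metric disc back to a disc retract via Lemma~\ref{l2}.
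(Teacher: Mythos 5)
Your proposal is correct and follows essentially the same route as the paper, whose proof of this theorem is precisely to repeat the last part of the proof of Theorem~\ref{t16} (constant-speed reparametrization via Lemma~\ref{l3}, mapping cylinders $Z^n_{\gamma^n,L,L}$, Arzel\`a--Ascoli for the curves, identification of the limit cylinder via~\eqref{e21}) with Theorem~\ref{t12}.\ref{en11} in place of~\ref{t12}.\ref{en12} and Theorem~\ref{t14} in place of Theorem~\ref{t15}. Your added details---the $L=0$ case, ruling out the point limit via $\mathcal{H}^2(W)\geq L^2>0$ from Theorem~\ref{t12}.\ref{en9}, and flagging the distortion estimate for $W_n\rightarrow_{GH}Z_{\gamma,L,L}$ that the paper asserts without proof---are correct refinements of the same argument.
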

\begin{proof}
Proceed exactly as in the last part of the proof of theorem~\ref{t16} applying theorem~\ref{t12}.\ref{en11} instead of theorem~\ref{t12}.\ref{en12} and theorem~\ref{t14} instead of theorem~\ref{t15}.
\end{proof}
\section{Acknowledgements}
\label{5}
I would like to thank my PhD advisor Alexander Lytchak for great support in everything.
\bibliographystyle{alpha}
\bibliography{BibClosureMinDiscs}
\end{document}